\documentclass{amsart}
\usepackage{amssymb,amstext,amsmath,amscd,amsthm,amsfonts,enumerate,latexsym, comment}
\usepackage{color}
\usepackage[dvipdfmx]{graphicx}
\usepackage[all]{xy}
\usepackage{tikz}

\theoremstyle{plain}
\newtheorem{thm}{Theorem}[section]
\newtheorem{theorem}[thm]{Theorem}
\newtheorem{prop}[thm]{Proposition}

\newtheorem{lem}[thm]{Lemma}

\newtheorem{cor}[thm]{Corollary}

\newtheorem*{claim*}{Claim}

\theoremstyle{definition}
\newtheorem{defn}[thm]{Definition}
\newtheorem{definition}[thm]{Definition}
\newtheorem{ex}[thm]{Example}
\newtheorem{example}[thm]{Example}
\newtheorem{rem}[thm]{Remark}

\newtheorem{fact}[thm]{Fact}

\newtheorem{setup}[thm]{Setup}

\numberwithin{equation}{section}

\newcommand{\rmr}{\mathrm{r}}

\newcommand{\fkm}{\mathfrak{m}}
\newcommand{\fkn}{\mathfrak{n}}

\newcommand{\Hom}{\operatorname{Hom}}
\newcommand{\Ext}{\operatorname{Ext}}

\newcommand{\Soc}{\operatorname{Soc}}
\newcommand{\PF}{\operatorname{PF}}
\newcommand{\pd}{\operatorname{pd}}
\newcommand{\id}{\operatorname{id}}

\newcommand{\Ann}{\operatorname{Ann}}

\newcommand{\Q}{\operatorname{Q}}

\newcommand{\conditionname}{\ensuremath{(\ast)}}

\title{On embeddings of rings into their canonical modules}

\author[T.~N.~An]{Tran Nguyen An}
\address{Department of Mathematics, Thai Nguyen University of Education,
Thai Nguyen, Vietnam}
\email{antn@tnue.edu.vn}

\author[S.~Kumashiro]{Shinya Kumashiro}
\address{Department of General education, Osaka Institute of Technology,
5-16-1 Omiya, Asahi-ku, Osaka, 535-8585, Japan}
\email{shinya.kumashiro@oit.ac.jp}

\author[M.~Samanta]{Mouma Samanta}
\address{Department of Mathematics, Indian Institute of Technology Kharagpur,
West Bengal 721302, India}
\email{mouma17@kgpian.iitkgp.ac.in}

\thanks{2020 {\em Mathematics Subject Classification.}
Primary 13H10; Secondary 13D07, 13F55, 05E40.}
\thanks{{\em Key words and phrases.}
canonical module, almost Gorenstein ring, Auslander--Reiten conjecture,
fiber product, numerical semigroup ring, Stanley--Reisner ring.}
\thanks{Kumashiro was supported by JSPS KAKENHI Grant Number JP21K13766.}
\thanks{Samanta was supported by Prime Minister's Research Fellowship,
Government of India.}

\begin{document}

\begin{abstract}
We study embeddings of Cohen--Macaulay local rings into their canonical modules such that the quotient of the cokernel by a regular sequence has the residue field as a direct summand. Motivated by almost Gorenstein rings, we prove an Ext-vanishing criterion for finite projective dimension, which implies G-regularity and the generalized Auslander--Reiten condition. We characterize this summand condition for one-dimensional fiber products and numerical semigroup rings. For Stanley--Reisner rings of graphs, we characterize the corresponding condition for graded embeddings in terms of the graph and show that it is equivalent to a strict multiplicity inequality.
\end{abstract}

\maketitle

\section{Introduction}\label{sec:introduction}

The rich duality theory of Gorenstein rings makes the search for a
theory between Cohen--Macaulay and Gorenstein rings particularly
compelling. The aim is to find classes that encompass a broad range
of examples while retaining enough of the Gorenstein structure to
support a rich structural and homological theory. Almost Gorenstein
rings offer an appealing approach to this goal: their formulation
in terms of canonical modules provides a concrete framework for
this search and has stimulated extensive research.

Let $(R,\fkm,k)$ be a Cohen--Macaulay local ring of dimension $d>0$
with canonical module $\omega_R$. Following Goto, Takahashi, and
Taniguchi \cite{GTT}, $R$ is called \emph{almost Gorenstein} if there
is an exact sequence
\begin{equation}\label{eq:intro-AG}
0\longrightarrow R\longrightarrow\omega_R\longrightarrow C
\longrightarrow0
\end{equation}
such that $e^0_{\fkm}(C)=\mu_R(C)$. Here $e^0_{\fkm}(C)$ denotes
the multiplicity of $C$ with respect to $\fkm$, and $\mu_R(C)$
denotes the number of minimal generators of $C$. In dimension one, the equality
$e^0_{\fkm}(C)=\mu_R(C)$ is equivalent to $\fkm C=0$, so the
defining sequence has a finite-dimensional $k$-vector space as
its cokernel.

Almost Gorenstein rings have been characterized or extensively studied in many classes, including numerical semigroup rings \cite{BF,GMP, Nari}, fiber products \cite{EGI}, and Stanley--Reisner rings \cite{MM}. These characterizations reveal both the scope of the theory and the restrictions imposed by almost Gorensteinness. The search for broader classes has led to several extensions, including {\it generalized Gorenstein local rings} \cite{GK} and {\it Goto rings} \cite{EndoGoto}.

To guide our search for a broader class, we turn to a result of Goto, Takahashi, and Taniguchi: every non-Gorenstein almost Gorenstein local ring is G-regular \cite[Corollary~4.5]{GTT}. We ask how far the defining condition can be weakened while preserving this property. In dimension one, the cokernel $C$ in \eqref{eq:intro-AG} is a nonzero direct sum of copies of $k$. The argument establishing G-regularity needs only one of these copies, so the same conclusion holds whenever $C$ has $k$ as a direct summand. This leads to the following definition.

\begin{definition}\label{def:CKS}
We say that $R$ satisfies condition~\conditionname{} if there exist an exact sequence
\[
0\longrightarrow R\longrightarrow\omega_R\longrightarrow C
\longrightarrow0
\]
with $C\ne0$ and a sequence $x_1,\ldots,x_{d-1}\in\fkm$ that is regular on both $R$ and $C$, such that $C/(x_1,\ldots,x_{d-1})C$ has $k$ as a direct summand.
\end{definition}

When $d=1$, the sequence is empty, so the condition means that
$C$ itself has $k$ as a direct summand.
In Theorem~\ref{thm:Ext-detection}, we prove that if $R$ satisfies
condition~\conditionname{}, then every finitely generated
$R$-module $M$ satisfying
\[
\Ext_R^i(M,R)=0\qquad(n\le i\le n+d-1)
\]
for some integer $n\ge d+2$ has finite projective dimension.
In particular, $R$ is G-regular and satisfies the generalized
Auslander--Reiten condition
(Corollary~\ref{cor:Gregular-AR}). We also prove that, when $k$ is infinite, condition~\conditionname{}
holds whenever the cokernel $C$ in an exact sequence
\eqref{eq:intro-AG} satisfies $e^0_{\fkm}(C)<2\mu_R(C)$
(Proposition~\ref{prop:local-numerical-criterion}). 

We now characterize condition~\conditionname{} for fiber products
and numerical semigroup rings, and then establish a corresponding
result for graded canonical embeddings of Stanley--Reisner rings.
Comparing these results with the known characterizations of
almost Gorenstein rings makes the extent of the enlargement explicit.

We begin with fiber products. Let $(A,\fkm,k)$ and $(B,\fkn,k)$ be Noetherian local rings. Let $f: A\to k$ and $g: B\to k$ be the canonical surjections. The subring of $A\times B$  
\[
A\times_k B := \{(a,b) \mid f(a)=g(b)\}
\]
is called {\it the fiber product of $A$ and $B$ with respect to $f$ and $g$}. 
If $A$ and $B$ are one-dimensional Cohen--Macaulay local rings, then so is $A\times_k B$ \cite[Lemma~2.1]{EGI}.

\begin{theorem}\label{thm:fiber-intro}
Let $(A,\fkm,k)$ and $(B,\fkn,k)$ be one-dimensional Cohen--Macaulay local rings, and put $R:=A\times_kB$.
Assume that $k$ is infinite, that $R$ has a canonical module,
and that $\Q(R)$ is Gorenstein. Then the following conditions
are equivalent:
\begin{enumerate}[\rm(i)]
\item $R$ satisfies condition~\conditionname{}.
\item $R$ is not Gorenstein.
\item At least one of $A$ and $B$ is not a discrete valuation ring.
\end{enumerate}
\end{theorem}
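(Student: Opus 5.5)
Here is the plan. The implication (i)$\Rightarrow$(ii) is immediate from the definition. If $R$ is Gorenstein, then $\omega_R\cong R$. Any injection $R\to\omega_R\cong R$ is multiplication by a non-zerodivisor $a$, with cokernel $R/aR$. Being Cohen--Macaulay of dimension $0$, $R/aR$ would have to be Gorenstein. Alternatively, argue homologically: Theorem~\ref{thm:Ext-detection} applied to $M=k$ over a Gorenstein ring of dimension one gives $\Ext^i_R(k,R)=0$ for $i\ge 2$, so $\pd_R k<\infty$ and $R$ is regular. But a fiber product of two one-dimensional rings has embedding dimension at least $2$ in dimension $1$, so it is never regular. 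Hence condition~\conditionname{} forces $R$ to be non-Gorenstein. I would rather use the direct route: if $R$ is Gorenstein and $C=R/aR$ has $k$ as a direct summand, then since $R/aR$ is Gorenstein artinian, hence indecomposable with simple socle, we get $R/aR\cong k$. Then $R$ is a DVR, contradicting the fact that a fiber product is not a domain.

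For (ii)$\Leftrightarrow$(iii), I would quote the known structure of fiber products. If both $A$ and $B$ are DVRs, then $R\cong k[[x,y]]/(xy)$ after completion (more precisely $R$ has embedding dimension $2$ and multiplicity $2$), which is a hypersurface and hence Gorenstein. Conversely, if, say, $A$ is not a DVR, then by the standard computation of the socle/type of fiber products (as in \cite{EGI}) we have $r(R)=r(A)+r(B)+1$ when neither is a DVR, and $r(R)\ge2$ in the mixed case. Either way $R$ is not Gorenstein. I would phrase this via $\mu_R(\omega_R)>1$, using \cite{EGI} for the canonical module of $A\times_kB$.

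The main step is (iii)$\Rightarrow$(i), where one constructs an embedding $R\to\omega_R$ whose cokernel has $k$ as a direct summand. Since $\Q(R)$ is Gorenstein and $k$ is infinite, I may realize $\omega_R$ as a fractional ideal $K$ with $R\subseteq K\subseteq\overline R$, using results stated earlier. Since $\Q(R)=\Q(A)\times\Q(B)$, both $\Q(A)$ and $\Q(B)$ are Gorenstein and $A,B$ admit such canonical ideals $K_A,K_B$. The description of the canonical module of the fiber product (\cite{EGI}) should express $K$ in terms of $K_A\times K_B$ together with the element $(1,0)$ or $(0,1)$. My plan is to exhibit an element of $K/R$ generating a copy of $k$ that splits off. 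Say $A$ is not a DVR. Then $(1,0)\in\overline R$ and $\fkm_R(1,0)=\fkm\times0\subseteq R$. I would check that $(1,0)\in K$ and that the class $\overline{(1,0)}$ in $C=K/R$ is a socle element not lying in $\fkm_R C$. For a cyclic socle submodule $k\cdot c$ of $C$, splitting is equivalent to $c\notin\fkm_R C$, since then a $k$-linear functional on $C/\fkm_RC$ lifts to a map $C\to k$. This is the expected obstacle: one must show $(1,0)\notin \fkm_R K+R$. I would verify it by projecting onto the $B$-component and comparing with $\fkn K_B+B$, using that $A$ is not a DVR to guarantee $K_A\neq A$, so that there is room in the $A$-part, and choosing $K$ suitably (possibly $K=R+ (K_A\times K_B)$ up to a unit twist) so that the $B$-coordinate of elements of $\fkm_RK$ lies in $\fkn K_B$ while $1\notin\fkn K_B$.
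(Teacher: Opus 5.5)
\textbf{Verdict.} Your arguments for (i)$\Rightarrow$(ii) and (ii)$\Leftrightarrow$(iii) are fine and essentially match the paper. Gorenstein plus condition~\conditionname{} forces $R/aR\cong k$, so $R$ would be regular, which a fiber product is not; the rest is \cite{EGI}. The gap is in (iii)$\Rightarrow$(i), and it has two parts.

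\textbf{First gap: no canonical ideal.} You never actually produce a canonical ideal of $R$. Since $R\subseteq A\times B\subseteq K_A\times K_B$, each candidate you mention is just $K_A\times K_B$. This covers $K_A\times K_B$ with $(1,0)$ or $(0,1)$ adjoined, and also $R+(K_A\times K_B)$; a unit twist does not change the isomorphism class. But $K_A\times K_B$ is not canonical: $(K_A\times K_B):\fkm_R=(K_A:\fkm)\times(K_B:\fkn)$, so its type is $2$. What is missing is a generator outside $K_A\times K_B$.
\begin{itemize}
\item If neither factor is a DVR, take $X=(K_A\times K_B)+R\psi$ with $\psi=(g_1,g_2)$, where $K_A:\fkm=K_A+Ag_1$, $K_B:\fkn=K_B+Bg_2$, $g_1\fkm=\fkm$ and $g_2\fkn=\fkn$. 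That $X\cong\omega_R$ is \cite[Lemma~4.2]{EGI}.
\item If one factor is a DVR, one needs $\psi=(x^{-1},g_2)$, and one must check by hand that the resulting $X$ has type $1$, as the paper does.
\end{itemize}
Incidentally, $A$ not being a DVR does not give $K_A\ne A$, since $A$ may be Gorenstein.

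\textbf{Second gap: the idempotent $(1,0)$ fails in the mixed case.} Even with the correct $\omega_R$, the idempotent of the non-DVR factor works only when both factors are non-DVRs.

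In that case your idea does go through, with an embedding different from the paper's. Use the inclusion $R\subseteq X$. Then $\fkm_RX=\fkm K_A\times\fkn K_B$. Suppose $(1,0)=(u,v)+(a,b)$ with $(u,v)\in\fkm_RX$ and $(a,b)\in R$. Then $b\in\fkn K_B\cap B=\fkn$, hence $a\in\fkm$, hence $1\in\fkm K_A\subseteq\fkm\overline A$, which is impossible. The paper instead uses $1\mapsto\psi$, with cokernel $K_A/\fkm\oplus K_B/\fkn$.

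When exactly one factor is a DVR, no choice of $K$ rescues the plan. For example, let $A=k[[t^2,t^3]]$ and $B=k[[s]]$, so that $\rmr(R)=2$. Suppose $R\subseteq K\cong\omega_R$ and the class of $(1,0)$ generates a $k$-summand of $K/R$.
\begin{itemize}
\item Lemma~\ref{prop:mu-type-minus-one} gives $\mu_R(K/R)=1$.
\item So that class generates $K/R$, and $K=R+R(1,0)=R+(A\times0)=A\times B$.
\item But $A\times B$ has type $\rmr(A)+\rmr(B)=2$, a contradiction.
\end{itemize}

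In the mixed case the socle element has to be twisted. In the paper's labeling, $A$ is the DVR with $\fkm=xA$ and $B$ is not a DVR. The paper keeps the embedding $1\mapsto\psi=(x^{-1},g_2)$ and uses the class of $(0,1)$ in $X/R\psi\cong L/\fkn$. After normalizing to $R\subseteq\psi^{-1}X$, this element becomes $(0,g_2^{-1})$, which is not an idempotent.
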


Under the same assumptions, $R=A\times_k B$ is almost Gorenstein if and only if both $A$ and $B$ are almost Gorenstein \cite[Theorem~1.1]{EGI}. 
Theorem~\ref{thm:fiber-intro} shows that condition~\conditionname{} holds for every non-Gorenstein fiber product in this setting, extending the scope well beyond almost Gorenstein fiber products.

We next consider numerical semigroup rings. A {\it numerical semigroup} is a subset $H\subseteq\mathbb N=\{0,1,2,\ldots\}$ containing $0$, closed under addition, and with finite complement in $\mathbb N$. For a field $k$, the associated {\it numerical semigroup ring} is
\[
k[[H]]:=k[[t^h \mid h\in H]]\subseteq k[[t]].
\]
This is a one-dimensional Cohen--Macaulay complete local domain with residue field $k$. The set of \emph{pseudo-Frobenius numbers} of $H$ is
\[
\PF(H):=\{a\in\mathbb N\setminus H\mid a+h\in H\text{ for all }h\in H\setminus\{0\}\}.
\]

\begin{theorem}\label{thm:semigroup-intro}
Let $H\subsetneq\mathbb N$ be a numerical semigroup and
$R=k[[H]]$. Set $\omega_R=\sum_{\gamma\in\PF(H)}Rt^{-\gamma}$, a canonical module of $R$. Then the following conditions are equivalent:
\begin{enumerate}[\rm(i)]
\item $R$ satisfies condition~\conditionname{}.
\item There exists $\delta\in\PF(H)$ such that
$\omega_R/Rt^{-\delta}$ has $k$ as a direct summand.
\item There exist $\alpha,\beta\in\PF(H)$ such that
$\alpha+\beta\in\PF(H)$.
\end{enumerate}
\end{theorem}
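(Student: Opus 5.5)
We prove (iii)$\Rightarrow$(ii)$\Rightarrow$(i)$\Rightarrow$(iii). Throughout, $d=1$. Since $R$ is a domain and $\omega_R$ has rank one, every cokernel $C=\omega_R/R\xi$ (with $0\neq \xi\in\omega_R$) has finite length. We use the standard criterion that $k$ is a direct summand of a finitely generated module $C$ if and only if $\Soc(C)\not\subseteq \fkm C$. Indeed, an element $z\in\Soc(C)\setminus\fkm C$ spans a copy of $k$, and this copy splits via $C\to C/\fkm C\to k$, where the last map is a functional sending $\bar z$ to $1$.

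We also use two bookkeeping facts about the monomial structure. First, $\omega_R$ is spanned over $k$ (in the $t$-adic sense) by the monomials $t^{-\beta+h}$ with $\beta\in\PF(H)$ and $h\in H$. Second, the images of $t^{-\gamma}$, $\gamma\in\PF(H)$, form a $k$-basis of $\omega_R/\fkm\omega_R$, and $\fkm\omega_R$ is spanned by the $t^{-\gamma+h}$ with $h\in H\setminus\{0\}$. For the latter, note that $-\gamma=-\gamma'+h$ with $h>0$ would give $\gamma'=\gamma+h\in H$, which is impossible.

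\emph{(iii)$\Rightarrow$(ii).} Put $\delta=\alpha+\beta\in\PF(H)$ and $z=t^{-\alpha}$. For $h\in H\setminus\{0\}$ we have $t^hz=t^{h+\beta}\,t^{-\delta}\in Rt^{-\delta}$, because $h+\beta\in H$. Hence $\bar z\in\Soc(\omega_R/Rt^{-\delta})$. Since all submodules involved are monomial, we can test $z\notin \fkm\omega_R+Rt^{-\delta}$ degreewise. The degree $-\alpha$ does not occur in $\fkm\omega_R$, by the second fact above. It also does not occur in $Rt^{-\delta}$, since that would require $\delta-\alpha=\beta\in H$. So the criterion gives (ii).

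\emph{(ii)$\Rightarrow$(i).} This is immediate: multiplication by $t^{-\delta}$ gives the required embedding.

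\emph{(i)$\Rightarrow$(iii).} This is the main step, because the embedding need not be monomial. Any map $R\to\omega_R$ is multiplication by some $\xi\in\omega_R$, so $C=\omega_R/R\xi$ with $\xi\neq0$. By the criterion, choose $z\in\omega_R$ with $\fkm z\subseteq R\xi$ and $z\notin\fkm\omega_R+R\xi$. Then $f:=z/\xi$ lies in $R:_{\Q(R)}\fkm=R+\sum_{\gamma\in\PF(H)}kt^{\gamma}$. Subtracting an element of $R\xi$ from $z$, we may assume $f=g:=\sum_{\gamma\in\PF(H)}c_\gamma t^\gamma$, so that $z=\xi g$.

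Since $z\notin\fkm\omega_R$, the second fact shows that the expansion of $z$ has a nonzero coefficient at $t^{-\alpha}$ for some $\alpha\in\PF(H)$. This coefficient equals $\sum_\gamma c_\gamma\,\xi_{-\alpha-\gamma}$, where $\xi_j$ denotes the coefficient of $t^j$ in $\xi$. Hence there is $\gamma\in\PF(H)$ with $\xi_{-\alpha-\gamma}\neq0$. By the first fact, $-\alpha-\gamma=-\beta+h$ for some $\beta\in\PF(H)$ and $h\in H$, that is, $\beta=\alpha+\gamma+h$.

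If $h>0$, then $\gamma+h\in H\setminus\{0\}$, so $\alpha+(\gamma+h)\in H$ because $\alpha\in\PF(H)$. This contradicts $\beta\notin H$. Therefore $h=0$ and $\alpha+\gamma=\beta\in\PF(H)$, which is (iii).

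The only delicate points are justifying the two monomial-support descriptions of $\omega_R$ and $\omega_R/\fkm\omega_R$, and the formula for $R:\fkm$. These are standard facts about numerical semigroup rings and $\PF(H)$.
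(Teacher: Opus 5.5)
Your proof is correct and follows essentially the same route as the paper. For (iii)$\Rightarrow$(ii) you exhibit a monomial socle element outside $\fkm\omega_R+Rt^{-\delta}$; for (i)$\Rightarrow$(iii) you reduce $z/\xi\in R:\fkm$ to $\sum c_\gamma t^\gamma$ using the description of $R:\fkm$ in Lemma~\ref{lem:2}, then compare the coefficient at $t^{-\alpha}$ to force $h=0$. Your explicit exponent bookkeeping for $\omega_R$ and $\fkm\omega_R$ just spells out what the paper uses implicitly when reading off that coefficient.
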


For comparison, we recall the characterization of almost Gorenstein numerical semigroup rings. Write $\PF(H)=\{\alpha_1<\cdots<\alpha_r\}$. Then $k[[H]]$ is almost Gorenstein if and only if $\alpha_i+\alpha_{r-i}=\alpha_r$ for all $1\le i\le r-1$ (\cite[Proposition~29]{BF}, \cite[Theorem~2.4]{Nari}).
Thus, for non-Gorenstein numerical semigroup rings, Theorem~\ref{thm:semigroup-intro} replaces this symmetry requirement by the existence of a single additive relation among pseudo-Frobenius numbers.

The final result concerns Stanley--Reisner rings of graphs. Let $\Delta$ be a finite simple graph on the vertex set $\{1,\ldots,n\}$ with at least one edge. We regard $\Delta$ as a simplicial complex whose faces are the empty set, the vertices, and the edges. Its \emph{Stanley--Reisner ring} over a field $k$ is
\[
k[\Delta]:=k[x_1,\ldots,x_n]/\left(\prod_{i\in F}x_i\ \middle|\ F\subseteq\{1,\ldots,n\},\ F\notin\Delta\right).
\]
This is a standard graded $k$-algebra of dimension two, and it is Cohen--Macaulay if and only if $\Delta$ is connected. 
A {\it bridge} of a connected graph is an edge whose deletion disconnects the graph, and a {\it cut vertex} is a vertex whose deletion disconnects it. The following theorem relates a residue-field summand condition for graded embeddings to a multiplicity inequality and the structure of the graph. Recall that for a standard graded Cohen--Macaulay $k$-algebra $R$, its \emph{$a$-invariant} is
\[
a(R)=-\min\{j\in\mathbb Z\mid[\omega_R]_j\ne0\}.
\]

\begin{theorem}\label{thm:graph-intro}
Let $k$ be an infinite field, and let $\Delta$ be a connected
finite simple graph with $n\ge3$ vertices, regarded as a
one-dimensional simplicial complex. Put $R=k[\Delta]$ and $\fkm_R=R_+$. 
Then the following conditions are equivalent:
\begin{enumerate}[\rm(i)]
\item There exist a graded exact sequence 
\[
0\longrightarrow R\longrightarrow\omega_R(-a(R))
\longrightarrow C\longrightarrow0
\]
and a linear form $f\in R_1$ regular on both $R$ and $C$ such that $C/fC$ has a graded $k$-summand.
\item There exists a graded exact sequence 
\[
0\longrightarrow R\longrightarrow\omega_R(-a(R))
\longrightarrow C\longrightarrow0
\]
such that $e^0_{\fkm_R}(C)<2\mu_R(C)$.
\item $\Delta$ is a tree with $n\ge4$, or $\Delta$ has no
bridges and has a cut vertex.
\end{enumerate}
Here $e^0_{\fkm_R}(C)$ denotes the multiplicity of $C$ with
respect to $\fkm_R$.
\end{theorem}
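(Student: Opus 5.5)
We will prove (i)$\Rightarrow$(ii)$\Rightarrow$(iii)$\Rightarrow$(i), after making the canonical module of $R=k[\Delta]$ explicit. Since $\Delta$ is a connected graph, $R$ is a two-dimensional Cohen--Macaulay ring with $a(R)=0$ when $\Delta$ is not a tree. By Hochster's formula, the graded pieces of $\omega_R$ are governed by the reduced (co)homology of links. In degree $0$, $\dim_k[\omega_R]_0=\dim\tilde H_1(\Delta)=|E|-n+1$. For a tree, $[\omega_R]_0=0$, $a(R)=-1$, and the lowest degree of $\omega_R$ is $1$ (or the relevant shift). In the next degree, the contribution of each vertex $v$ is $\dim\tilde H_0(\mathrm{lk}\,v)=\deg v-1$. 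A graded map $R\to\omega_R(-a(R))$ is given by an element $\eta$ of lowest degree. It is injective exactly when $\eta$ does not vanish on any edge component, i.e.\ when $\eta$ is ``nonzero on every facet''. We will use the Stanley--Reisner description of $\omega_R$ as a module over each edge ring $k[x_i,x_j]$: the localization at each minimal prime $P_e$ is free of rank one.

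\emph{Multiplicity computation.} For $C=\omega_R(-a)/R\eta$, $C$ is one-dimensional Cohen--Macaulay, or zero. By additivity, $e^0(C)=e(\omega_R)-e(R)$ summed over only those edges where the ranks agree, so $e^0(C)=0$ in the zero-dimensional sense. The genuine invariant is the degree-one multiplicity, which we compute from Hilbert series. Hilbert series of $R$ and $\omega_R$ are given by $h$-vectors: $h(R)=(1,n-2,|E|-n+1)$, and $\omega_R$ has the reversed $h$-vector. Hence $C$ has $h$-polynomial $h_\omega(t)t^{-a}-h_R(t)$. Then $e^0(C)$ is the derivative of this at $t=1$, which equals a combinatorial expression in $n$ and $|E|$. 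Meanwhile $\mu(C)=\mu(\omega_R)-1$, and $\mu(\omega_R)$ is computable via the type, $\dim \mathrm{Tor}$, i.e.\ the last Betti number, from Hochster's formula as the sum over degrees of $\dim\tilde H$ of links/restrictions. These give (ii) as an explicit inequality in the invariants: $\tilde H_1(\Delta)$, the number of cut vertices, and the number of bridges. The hard part will be computing $\mu(\omega_R)$ in the non-tree case: generators of $\omega_R$ in degree $0$ number $|E|-n+1$, and additional generators in degree $1$ come from vertices $v$ with $\mathrm{lk}\,v$ disconnected relative to $\Delta\setminus v$ (cut vertices) and from bridges. We will derive these via the exact sequence relating $\omega_R$ to $\bigoplus_{e}\omega_{k[e]}$.

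\emph{Implications.} (i)$\Rightarrow$(ii) follows the local argument of Proposition~\ref{prop:local-numerical-criterion} in reverse: a $k$-summand of $C/fC$ forces $\mu(C)$ to exceed $\ell(C/fC)/2$. More directly, we will show that (i) forces the existence of a socle generator of $C/fC$ in minimal degree not in $\fkm(C/fC)$, and then compare the degree bounds. (ii)$\Rightarrow$(iii) will be a case check using the formulas: for a tree, (ii) reduces to $n\ge4$ (the path on $3$ vertices is Gorenstein-like, so $C=0$ or fails). For a non-tree with a bridge, or with no cut vertex (2-connected, where $\omega_R$ is generated in degree $0$ and $C$ has $e=\mu$ or larger—forcing almost Gorenstein-type equality failure), the inequality fails. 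For (iii)$\Rightarrow$(i), we will construct $\eta$ explicitly. For a tree with $n\ge4$, we choose $\eta$ supported at a vertex of degree $\ge2$ appropriately. For bridgeless graphs with a cut vertex $v$, we choose $\eta$ as a general degree-$0$ cycle class and $f$ a general linear form, and exhibit a degree-one generator attached to $v$ whose image in $C/fC$ is killed by $\fkm$ and splits off by degree reasons. Infinitude of $k$ supplies the general $f$, and here we will invoke Proposition~\ref{prop:local-numerical-criterion}'s graded analogue to shortcut (ii)$\Rightarrow$(i). The main obstacle we anticipate is the precise generator count of $\omega_R$ in terms of bridges and cut vertices.
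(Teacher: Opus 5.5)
Your skeleton matches the paper's: $e^0$ from $h$-vectors, $\mu_R(C)=\mathrm{r}(R)-1$, $\mathrm{r}(R)$ from Hochster's formula, and (i)$\Leftrightarrow$(ii) via the degree concentration forced by $a(R)\le0$. There is, however, a genuine gap: your treatment of bridges is wrong, and as written (ii)$\Rightarrow$(iii) fails for non-trees with a bridge.

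\textbf{The gap.} You claim that for a non-tree with a bridge ``the inequality fails'', and you expect bridges to contribute extra degree-one generators of $\omega_R$. Neither is true. Hochster's formula gives $\mathrm{r}(R)=\dim\tilde H_1(\Delta)+\sum_v\dim\tilde H_0(\Delta-v)=(m-n+1)+\sum_v(c(\Delta-v)-1)$, with no bridge term. Since $n\ge3$, an endpoint of any bridge is a cut vertex. So the data you would plug in, $e^0=2(m-n)$ and $\mu=\mathrm{r}(R)-1$, actually satisfy $e^0<2\mu$; two triangles joined by an edge give $2<6$. No numerical argument can exclude this case.

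\textbf{The missing idea.} Condition (ii) fails here because no graded embedding exists at all. In this case $a(R)=0$ and $[\omega_R]_0\cong\tilde H_1(\Delta;k)$ is the cycle space inside $k^{E}$. By your own criterion, $1\mapsto\eta$ is injective iff $\eta$ has nonzero coordinate on every edge, but every cycle has coordinate $0$ on a bridge. Conversely, if $\Delta$ is bridgeless, every edge lies on a cycle, so each coordinate functional is nonzero on the cycle space. Since $k$ is infinite, a general $\eta$ is then nowhere zero; your ``general degree-$0$ cycle class'' in (iii)$\Rightarrow$(i) silently needs exactly this. The paper gets both directions from Matsuoka--Murai (Remark~\ref{rem:graph-embeddings}(ii)). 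Only then does it compute, in the bridgeless case, $e^0_{\fkm_R}(C)=2(m-n)$ and $2\mu_R(C)-e^0_{\fkm_R}(C)=2\sum_v(c(\Delta-v)-1)$.

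\textbf{Smaller issues.}
\begin{enumerate}
\item In the tree case ($a(R)=-1$), an $\eta\in[\omega_R]_1$ supported at one vertex $v$ is killed by $x_wx_{w'}$ for every edge $\{w,w'\}$ with $v\notin\{w,w'\}$, since the resulting multidegree has support $\{v,w,w'\}\notin\Delta$. So $\eta$ vanishes at those minimal primes. For non-star trees such as the path on four vertices you need a general element of $[\omega_R]_1$. Alternatively, as the paper does, use that trees give almost Gorenstein graded rings (Remark~\ref{rem:graph-embeddings}(i)) together with Corollary~\ref{cor:graded-AG-implies-summand}.
\item $\operatorname{Hilb}_{\omega_R(-a)}(t)=t^{a}\operatorname{Hilb}_{\omega_R}(t)$, not $t^{-a}$. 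With $a=-1$ your formula yields negative coefficients; the correct series is $(n-3)/(1-t)$.
\item In the $2$-connected case you need $e^0_{\fkm_R}(C)=2\mu_R(C)$ exactly, not ``$e=\mu$ or larger''.
\item (i)$\Rightarrow$(ii) is not a reversal of Proposition~\ref{prop:local-numerical-criterion}, whose converse fails (Example~\ref{ex:canonical-nonconverse}). It works because $a(R)\le0=2-d$ forces $C/fC$ into degrees $0$ and $1$ with $(C/fC)^\vee\cong(C/fC)(a(R)+1)$; this is Proposition~\ref{prop:graded-numerical-criterion}.
\end{enumerate}
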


Matsuoka and Murai proved that $k[\Delta]$ is almost Gorenstein as a graded ring if and only if $\Delta$ is a tree or can be obtained from a cycle by repeatedly attaching another cycle along a single vertex \cite[Proposition~3.8]{MM}. Theorem~\ref{thm:graph-intro} yields an infinite family of examples beyond almost Gorenstein graded rings. 

The paper is organized as follows.
Section~\ref{sec:general} develops the general properties of embeddings of rings into their canonical modules. 
Section~\ref{sec:fiber} treats fiber products and
Theorem~\ref{thm:fiber-intro}.
Section~\ref{sec:semigroup} proves
Theorem~\ref{thm:semigroup-intro}, and
Section~\ref{sec:graph} proves Theorem~\ref{thm:graph-intro}.

\begin{setup}\label{setup15}
Throughout this paper, unless otherwise noted, $(R,\fkm,k)$ denotes a Cohen--Macaulay local ring of dimension $d>0$ with maximal ideal $\fkm$ and the residue field $k$. Suppose that $R$ possesses the canonical module $\omega_R$. We denote by $\mathrm{Q}(R)$ (resp. $\overline R$) the total quotient ring of $R$ (resp. the integral closure in $\Q(R)$). A finitely generated $R$-submodule $I$ of $\mathrm{Q}(R)$ is called {\it a fractional ideal} of $R$ if $\mathrm{Q}(R)\otimes _R I\cong \mathrm{Q}(R)$. For fractional ideals $I$ and $J$, multiplication induces an isomorphism
\[
I:J\cong \Hom_R(J, I), \qquad \alpha\longmapsto (x\longmapsto  \alpha x).
\]

Let $M$ be a finitely generated $R$-module of dimension $s$. We denote by $\ell_R(M)$ (resp. $\mu_R(M)$) the {\it length of $M$} (resp. {\it the number of minimal generators of $M$}). We write $\widehat{M}$ for the completion of $M$. 
$e^0_{\fkm}(M)$ denotes {\it the multiplicity of $M$ with respect to $\fkm$}, that is, the leading coefficient of the polynomial agreeing with $s! \cdot \ell_R(M/\fkm^{n+1}M)$ for all sufficiently large $n$.
When $M$ is Cohen--Macaulay, $\mathrm{r}_R(M):=\ell_R(\Ext_R^s(k,M))$ is called {\it the Cohen--Macaulay type of $M$}. 
\end{setup}

\section{Embeddings into canonical modules}
\label{sec:general}

\subsection{The local case}\label{subsec:local}

Let $(R,\fkm,k)$ be a Cohen--Macaulay local ring of dimension $d>0$ with canonical module $\omega_R$. 

\begin{rem}\label{rem:generically-gorenstein}
The following conditions are equivalent:
\begin{enumerate}[{\rm (i)}] 
\item There exists an injective $R$-linear homomorphism $R\to\omega_R$. 
\item $\Q(R)$ is Gorenstein. 
\item $\omega_R$ is isomorphic to an ideal of $R$ containing a nonzerodivisor.
\end{enumerate}
\end{rem}

\begin{proof}
{\rm(i)} $\Rightarrow$ {\rm(ii)}: This follows from \cite[Lemma~3.1(1),(2)]{GTT}.

{\rm(ii)} $\Rightarrow$ {\rm(iii)}: This is \cite[Proposition~3.3.18]{BH}.

{\rm(iii)} $\Rightarrow$ {\rm(i)}: Identify $\omega_R$ with an ideal $I$ containing a nonzerodivisor $a$. Then multiplication by $a$ gives an injection $R\to I\cong\omega_R$.
\end{proof}

\begin{definition}\label{def22} (Definition~\ref{def:CKS})
We say that $R$
satisfies condition~\conditionname{} if there exist an exact
sequence
\[
0\longrightarrow R\longrightarrow\omega_R\longrightarrow C
\longrightarrow0
\]
with $C\ne0$ and a sequence $x_1,\ldots,x_{d-1}\in\fkm$ that is regular on both $R$ and $C$, such that $C/(x_1,\ldots,x_{d-1})C$ has $k$ as a direct summand.
\end{definition}

\begin{rem}\label{rem:simultaneous-regular-sequence}
In Definition \ref{def22}, it suffices to assume the existence of a $C$-regular sequence $x_1,\ldots,x_{d-1}\in\fkm$ such that $C/(x_1,\ldots,x_{d-1})C$ has $k$ as a direct summand. Such a sequence can always be replaced by one that is regular on both $R$ and $C$.

Indeed, the assertion is clear when $d=1$, so assume $d\ge2$. By Remark~\ref{rem:generically-gorenstein}, we may identify $\omega_R$ with an ideal of $R$. The image of $1$ under the embedding is then a nonzerodivisor annihilating $C$. Hence $\Ann_R(C)$ contains an $R$-regular element. By Davis's lemma \cite[Exercise~16.8]{Matsumura}, we can choose $a\in\Ann_R(C)$ such that $x_1+a$ is $R$-regular. Since $aC=0$, replacing $x_1$ by $x_1+a$ preserves its action on $C$. Repeating this argument on the successive quotients proves the assertion.
\end{rem}

\begin{rem}\label{lem:gorenstein-condition-regular}
$R$ is Gorenstein and satisfies
condition~\conditionname{} if and only if $R$ is regular.
\end{rem}

\begin{proof}
If $R$ is regular, choose a regular system of parameters
$x_1,\ldots,x_d$.
The embedding $R\xrightarrow{x_1}R\cong\omega_R$ has
cokernel $R/x_1R$, whose quotient by $x_2,\ldots,x_d$ is $k$.
Hence $R$ satisfies condition~\conditionname{}.

Conversely, suppose that $R$ is Gorenstein and satisfies
condition~\conditionname{}.
Since $\omega_R\cong R$, there are a nonzerodivisor
$u\in\fkm$ and an $R/uR$-regular sequence
$x_2,\ldots,x_d\in\fkm$ such that
$R/(u,x_2,\ldots,x_d)$ has $k$ as a direct summand.
Since $R/(u,x_2,\ldots,x_d)$ is cyclic, so it is isomorphic to $k$. 
Thus $\fkm=(u,x_2,\ldots,x_d)$, and $R$ is regular.
\end{proof}

\begin{theorem}\label{thm:Ext-detection}
Suppose that $R$ satisfies condition~\conditionname{}.
Let $M$ be a finitely generated $R$-module.
If there is an integer $n\ge d+2$ such that
\[
\Ext_R^i(M,R)=0\qquad(n\le i\le n+d-1),
\]
then $\pd_R M<\infty$.
\end{theorem}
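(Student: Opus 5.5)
The plan is to move the Ext-vanishing from $R$ to the cokernel $C$ using $\omega_R$. Then we cut $C$ down by the regular sequence until $k$ appears as a direct summand, and finally use the standard fact that $\Ext_R^j(M,k)=0$ for a single $j$ forces $\pd_R M<j$.

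\emph{Step 1: transfer to $C$.} Fix an exact sequence $0\to R\to\omega_R\to C\to0$ and $x_1,\ldots,x_{d-1}$ as in condition~\conditionname{}. Apply $\Hom_R(M,-)$ to get the long exact sequence
\[
\Ext_R^i(M,\omega_R)\to\Ext_R^i(M,C)\to\Ext_R^{i+1}(M,R)\to\Ext_R^{i+1}(M,\omega_R).
\]
Since $\id_R\omega_R=d$, we have $\Ext_R^i(M,\omega_R)=0$ for all $i>d$. Hence $\Ext_R^i(M,C)\cong\Ext_R^{i+1}(M,R)$ for every $i\ge d+1$. The hypothesis $n\ge d+2$ makes the index range $i=n-1,\ldots,n+d-2$ lie in $i\ge d+1$. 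So we obtain
\[
\Ext_R^i(M,C)=0\qquad(n-1\le i\le n+d-2),
\]
which is a window of $d$ consecutive indices.

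\emph{Step 2: cut down by the regular sequence.} Put $C_j=C/(x_1,\ldots,x_j)C$. Because $x_{j+1}$ is $C_j$-regular, we have the exact sequence $0\to C_j\xrightarrow{x_{j+1}}C_j\to C_{j+1}\to0$. Its long exact Ext sequence shows that if $\Ext_R^i(M,C_j)$ vanishes for $i$ in a window $[a,b]$, then $\Ext_R^i(M,C_{j+1})$ vanishes for $i\in[a,b-1]$. Each step therefore shortens the window by one. After $d-1$ steps, starting from the window of length $d$, we get $\Ext_R^{n-1}(M,C_{d-1})=0$. Only $C$-regularity of the $x_j$ is used here; regularity on $R$ plays no role.

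\emph{Step 3: conclude.} By condition~\conditionname{}, $k$ is a direct summand of $C_{d-1}$. Hence $\Ext_R^{n-1}(M,k)$ is a direct summand of $\Ext_R^{n-1}(M,C_{d-1})=0$. Thus the Betti number $\beta_{n-1}(M)=\dim_k\Ext_R^{n-1}(M,k)$ vanishes, so the minimal free resolution of $M$ stops before degree $n-1$. Therefore $\pd_R M<n-1<\infty$.

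The only delicate point is the index bookkeeping in Step 1. The ring-to-$C$ shift uses up one index, which is why $n\ge d+2$ (rather than $n\ge d+1$) is needed for the $\omega_R$-terms to vanish. The window length $d$ is exactly what the $d-1$ reductions in Step 2 consume. The argument is otherwise routine.
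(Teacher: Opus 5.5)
Your proof is correct and follows the paper's argument: use $\id_R\omega_R=d$ to move the vanishing window to $\Ext_R^{j}(M,C)=0$ for $n-1\le j\le n+d-2$, shrink this window to $\Ext_R^{n-1}\bigl(M,C/(x_1,\ldots,x_{d-1})C\bigr)=0$, and conclude from $\Ext_R^{n-1}(M,k)=0$. The only difference is that you prove the window-shortening directly from the long exact sequences of $0\to C_j\xrightarrow{x_{j+1}}C_j\to C_{j+1}\to0$, whereas the paper cites \cite[Lemma~2.3(3)]{DG} for that step.
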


\begin{proof}
Choose an exact sequence
\[
0\longrightarrow R\longrightarrow\omega_R\longrightarrow C\longrightarrow0
\]
and a $C$-regular sequence $x_1,\ldots,x_{d-1}\in\fkm$ such that $C/(x_1,\ldots,x_{d-1})C$ has $k$ as a direct summand.

Applying $\Hom_R(M,-)$ to the above exact sequence gives
\[
\Ext_R^j(M,\omega_R)\longrightarrow\Ext_R^j(M,C)\longrightarrow\Ext_R^{j+1}(M,R)\longrightarrow\Ext_R^{j+1}(M,\omega_R).
\]
Since $\id_R\omega_R=d$, by the hypothesis and the inequality $n-1>d$,
\[
\Ext_R^j(M,C)\cong\Ext_R^{j+1}(M,R)=0\qquad(n-1\le j\le n+d-2).
\]

By \cite[Lemma~2.3(3)]{DG}, it follows that
\[
\Ext_R^{n-1}\bigl(M,C/(x_1,\ldots,x_{d-1})C\bigr)=0.
\]
Since the quotient has $k$ as a direct summand, $\Ext_R^{n-1}(M,k)=0$, and hence $\pd_R M<\infty$.
\end{proof}

We recall the generalized Auslander--Reiten condition and G-regularity.

\begin{defn}[{\cite[Lemma~2.1]{DKS}}]\label{def:GARC}
We say that $R$ satisfies the \emph{generalized Auslander--Reiten condition} (GARC) if the following equivalent conditions hold.
\begin{enumerate}[\rm(i)]
\item Every finitely generated $R$-module $M$ satisfying $\Ext_R^i(M,M\oplus R)=0$ for all $i\gg0$ has finite projective dimension.
\item The \emph{symmetric Auslander condition} (SAC) holds: for every finitely generated $R$-module $M$, 
\[
\Ext_R^i(M,R)=0\quad(i>0),\qquad \Ext_R^i(M,M)=0\quad(i\gg0)
\]
implies $\Ext_R^i(M,M)=0$ for all $i>0$.
\end{enumerate}
\end{defn}

\begin{defn}[{\cite{TakahashiGregular}}]\label{def:Gregular}
The ring $R$ is called \emph{G-regular} if every finitely generated totally reflexive $R$-module is free. Here a finitely generated $R$-module $M$ is \emph{totally reflexive} if $M$ is reflexive and $\Ext_R^i(M,R)=\Ext_R^i(\Hom_R(M,R),R)=0$ for all $i>0$.
\end{defn}

As a direct consequence of Theorem~\ref{thm:Ext-detection}, we get the following. 

\begin{cor}\label{cor:Gregular-AR}
A Cohen--Macaulay local ring satisfying condition~\conditionname{} is G-regular and satisfies the
generalized Auslander--Reiten condition.
\end{cor}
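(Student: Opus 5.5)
Both assertions will be read off from Theorem~\ref{thm:Ext-detection}: each hypothesis supplies the vanishing of $\Ext_R^i(M,R)$ for all $i>0$ (respectively for all $i\gg0$), hence on the window $n\le i\le n+d-1$ for a suitable $n\ge d+2$. This gives $\pd_R M<\infty$, and we then use that a module of finite projective dimension with enough vanishing Ext into $R$ (or into itself) must be free.

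\emph{G-regularity.} Let $M$ be a totally reflexive $R$-module. Then $\Ext_R^i(M,R)=0$ for all $i>0$, so Theorem~\ref{thm:Ext-detection} with $n=d+2$ gives $\pd_R M<\infty$. Put $p=\pd_R M$ and suppose $p>0$. Take a minimal free resolution $0\to F_p\xrightarrow{\partial_p}F_{p-1}\to\cdots$. Then $\Ext_R^p(M,R)$ is the cokernel of $\partial_p^*\colon F_{p-1}^*\to F_p^*$. Minimality gives $\partial_p^*\otimes k=0$, so this cokernel is nonzero by Nakayama, contradicting total reflexivity. Hence $p=0$ and $M$ is free. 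Equivalently, $\Ext_R^{p}(M,R)\neq0$ whenever $p=\pd_R M<\infty$.

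\emph{GARC.} We verify condition~(i) of Definition~\ref{def:GARC}. Suppose $\Ext_R^i(M,M\oplus R)=0$ for $i\gg0$. In particular $\Ext_R^i(M,R)=0$ for all $i\ge N$, with $N$ large. Choose $n\ge\max\{N,d+2\}$; Theorem~\ref{thm:Ext-detection} then yields $\pd_R M<\infty$, which is exactly what (i) demands.

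The only point needing care is the equivalence of (i) and (ii) in Definition~\ref{def:GARC}, but it is quoted from \cite{DKS}, so verifying (i) suffices. Everything is therefore routine once Theorem~\ref{thm:Ext-detection} is available. The one place that uses more than the theorem is the Nakayama argument in the G-regular case, which shows a nonzero top Ext for a module of finite projective dimension.
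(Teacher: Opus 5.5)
Your proposal is correct and follows the paper's route. The paper states the corollary as a direct consequence of Theorem~\ref{thm:Ext-detection}, and you make explicit the two steps it leaves implicit: choosing $n=d+2$ (or $n$ large), and the standard Nakayama argument showing that a totally reflexive module of finite projective dimension is free.
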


The following lemma gives a criterion for an $R$-module to have $k$ as a direct summand.

\begin{lem}\label{lem:local-summand}
Let $M$ be a finitely generated $R$-module. The following conditions are equivalent:
\begin{enumerate}[\rm(i)]
\item $M$ has $k$ as a direct summand;
\item there exists $y\in M\setminus\fkm M$ such that $\fkm y=0$;
\item $\Soc(M)\not\subseteq\fkm M$.
\end{enumerate}
\end{lem}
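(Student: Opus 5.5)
We prove the cycle (i) $\Rightarrow$ (ii) $\Rightarrow$ (iii) $\Rightarrow$ (i), using only elementary properties of socles and minimal generators. By Nakayama's lemma, an element $y\in M$ lies outside $\fkm M$ exactly when its image in $M/\fkm M$ is nonzero, that is, when $y$ can be extended to a minimal generating set of $M$.

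For (i) $\Rightarrow$ (ii), write $M=N\oplus L$ with $L\cong k$, and let $y$ correspond to a generator of $L$. Then $\fkm y=0$. Also $\fkm M=\fkm N\oplus \fkm L=\fkm N$, so $y\notin\fkm M$, since its $L$-component is nonzero.

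For (ii) $\Rightarrow$ (iii), the element $y$ is annihilated by $\fkm$, so $y\in\Soc(M)$. Since $y\notin\fkm M$, we get $\Soc(M)\not\subseteq\fkm M$.

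For (iii) $\Rightarrow$ (i), which is the only step with content, pick $y\in\Soc(M)\setminus\fkm M$. Then $Ry=ky\cong k$, because $\fkm y=0$ and $y\ne0$. We must split the inclusion $Ry\hookrightarrow M$. Since $\bar y\ne0$ in the $k$-vector space $M/\fkm M$, choose a $k$-linear functional $\varphi\colon M/\fkm M\to k$ with $\varphi(\bar y)=1$. The composite
\[
\pi\colon M\to M/\fkm M\xrightarrow{\varphi}k\cong Ry
\]
is $R$-linear, because $M\to M/\fkm M$ is $R$-linear and any $k$-linear map between $R/\fkm$-modules is $R$-linear. It sends $y$ to $y$. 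Hence $\pi$ is a retraction of the inclusion $Ry\to M$, and $M\cong Ry\oplus\ker\pi$, which has $k$ as a direct summand.

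The only point needing care is this last step: checking that the functional $\varphi$ gives an $R$-linear retraction. That holds because $M/\fkm M$ and $Ry$ are both killed by $\fkm$.
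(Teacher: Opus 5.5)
Your proof is correct and is essentially the paper's argument. The substantive step, the $R$-linear retraction $M\to M/\fkm M\to k$ of the inclusion $Ry\cong k\hookrightarrow M$, is the same. The paper applies it from (ii) while you apply it from (iii), which is only a relabeling since (ii) and (iii) are trivially equivalent.
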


\begin{proof}
The implications {\rm(i)}$\Rightarrow${\rm(ii)} and {\rm(ii)}$\Leftrightarrow${\rm(iii)} are immediate. To prove {\rm(ii)}$\Rightarrow${\rm(i)}, let $\iota:k\to M$ be the injection sending $1$ to $y$. Since the image of $y$ in $M/\fkm M$ is nonzero, there is a $k$-linear functional $M/\fkm M\to k$ sending it to $1$. Its composite with $M\to M/\fkm M$ is an $R$-linear retraction of $\iota$, so $k$ is a direct summand of $M$.
\end{proof}

Throughout the rest of this subsection, suppose that there is an exact sequence
\begin{equation}\label{eq:local-canonical-embedding}
0\longrightarrow R\xrightarrow{\varphi}\omega_R
\longrightarrow C\longrightarrow0
\end{equation}
with $C\ne0$.

\begin{rem}[{\cite[Lemma~3.1(2)]{GTT}}]\label{rem:local-cokernel-dimension}
The cokernel $C$ in \eqref{eq:local-canonical-embedding} is a Cohen--Macaulay $R$-module of dimension $d-1$.
\end{rem}

\begin{lem}\label{prop:local-self-duality}
For the cokernel $C$ in \eqref{eq:local-canonical-embedding}, there is an isomorphism 
\[
\Ext_R^1(C,\omega_R)\cong C.
\]
In particular, $\mathrm{r}_R(C)=\mu_R(C)$.
\end{lem}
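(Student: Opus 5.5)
Apply $\Hom_R(-,\omega_R)$ to the sequence \eqref{eq:local-canonical-embedding}. Since $C$ is Cohen--Macaulay of dimension $d-1$ by Remark~\ref{rem:local-cokernel-dimension}, local duality gives $\Hom_R(C,\omega_R)=0$ and $\Ext_R^i(C,\omega_R)=0$ for $i\ne1$. Also $\Ext_R^1(\omega_R,\omega_R)=0$, because $\omega_R$ is maximal Cohen--Macaulay. This yields the exact sequence
\[
0\longrightarrow \Hom_R(\omega_R,\omega_R)\xrightarrow{\varphi^*}\Hom_R(R,\omega_R)\longrightarrow \Ext_R^1(C,\omega_R)\longrightarrow 0 .
\]
Here $\Hom_R(\omega_R,\omega_R)\cong R$ via homotheties, and $\Hom_R(R,\omega_R)\cong\omega_R$ via evaluation at $1$.

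Under these identifications, $\varphi^*$ sends the homothety $a$ to $a\varphi(1)=\varphi(a)$, so $\varphi^*$ is just $\varphi$. Therefore
\[
\Ext_R^1(C,\omega_R)\cong\omega_R/\varphi(R)=C.
\]
The only point needing care is this compatibility check. It is a one-line computation once the natural isomorphisms are written down.

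For the consequence, use the duality for Cohen--Macaulay modules of dimension $s=d-1$: $C^\vee:=\Ext_R^{1}(C,\omega_R)$ is Cohen--Macaulay, and $\mathrm{r}_R(C)=\mu_R(C^\vee)$ (\cite[Proposition~3.3.11]{BH}). Combining this with $C^\vee\cong C$ gives $\mathrm{r}_R(C)=\mu_R(C)$.

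If one prefers to avoid citing that fact, reduce instead by a sequence $x_1,\ldots,x_{d-1}$ that is regular on $R$ and $C$, as in Remark~\ref{rem:simultaneous-regular-sequence}. This reduces to the Artinian module $\overline C=C/(x_1,\ldots,x_{d-1})C$ over $\overline R=R/(x_1,\ldots,x_{d-1})$. The isomorphism descends, since $\Ext^1_R(C,\omega_R)/\underline{x}\,\Ext^1_R(C,\omega_R)\cong\Ext^1_{\overline R}(\overline C,\omega_{\overline R})$ by the standard base change for canonical modules. In the Artinian case, $\Ext^1$ becomes Matlis duality, and $\ell(\Soc\overline C)=\mu(\overline C^\vee)$ is immediate.
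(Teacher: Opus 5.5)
Your proposal is correct and matches the paper's proof: you dualize \eqref{eq:local-canonical-embedding} into $\omega_R$ and use $\Hom_R(C,\omega_R)=0=\Ext_R^1(\omega_R,\omega_R)$ to get $0\to R\xrightarrow{\varphi}\omega_R\to\Ext_R^1(C,\omega_R)\to0$, then deduce $\mathrm{r}_R(C)=\mu_R(C)$ from \cite[Proposition~3.3.11]{BH}. Your explicit check that $\varphi^*$ corresponds to $\varphi$ spells out a step the paper leaves implicit, and the alternative reduction modulo a regular sequence is not needed.
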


\begin{proof}
Applying $\Hom_R(-,\omega_R)$ to
\eqref{eq:local-canonical-embedding} gives
\[
0\longrightarrow R\xrightarrow{\varphi}\omega_R
\longrightarrow\Ext_R^1(C,\omega_R)\longrightarrow0
\]
by Remark \ref{rem:local-cokernel-dimension} and \cite[Proposition 3.3.3]{BH}. 
The equality $\mathrm{r}_R(C)=\mu_R(C)$ follows by \cite[Proposition 3.3.11]{BH}. 
\end{proof}

\begin{lem}\label{prop:mu-type-minus-one}
Assume that $d=1$ and that $R$ is not Gorenstein.
If $C$ in \eqref{eq:local-canonical-embedding}
has $k$ as a direct summand, then $\varphi(1)\notin\fkm\omega_R$. Equivalently, 
\[
\mu_R(C)=\mathrm{r}(R)-1.
\]
\end{lem}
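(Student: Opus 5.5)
The approach is to move to fractional ideals, where the direct-summand condition becomes a membership condition, and then show that $\varphi(1)\in\fkm\omega_R$ would force $\fkm$ to be principal.

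\emph{Normalization.} By Remark~\ref{rem:generically-gorenstein}, $\omega_R$ is isomorphic to an ideal containing a nonzerodivisor. Put $a:=\varphi(1)$. It is a nonzerodivisor, since $\varphi$ is injective and $\omega_R$ is torsion-free. Multiplying by $a^{-1}$ inside $\Q(R)$, we may assume $\omega_R=K$ is a fractional ideal with $R\subseteq K$ and $\varphi(1)=1$. Then $C=K/R$, and Setup~\ref{setup15} gives $K:K=\Hom_R(K,K)=R$.

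\emph{Translating the summand condition.} We apply Lemma~\ref{lem:local-summand}(ii) to $C$. Since $\fkm C=(\fkm K+R)/R$, there is some $y\in K$ with $y\notin \fkm K+R$ and $\fkm y\subseteq R$.

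\emph{Argument by contradiction.} Suppose $1\in\fkm K$, and write $1=\sum_i m_ik_i$ with $m_i\in\fkm$ and $k_i\in K$. Then
\[
y=\sum_i (m_iy)\,k_i ,\qquad m_iy\in R .
\]
We split into two cases.
\begin{itemize}
\item If every $m_iy$ lies in $\fkm$, then $y\in\fkm K$, contradicting the choice of $y$.
\item Otherwise some $m_iy$ is a unit of $R$. Then $\fkm y$ is an ideal of $R$ containing a unit, so $\fkm y=R$. Hence $y$ is invertible in $\Q(R)$ and $\fkm=Ry^{-1}$, i.e.\ $\fkm$ is an invertible, hence principal, ideal.
\end{itemize}
In the second case $R$ is a one-dimensional local ring with principal maximal ideal, so it is a discrete valuation ring. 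In particular $R$ is Gorenstein, contrary to hypothesis. Therefore $\varphi(1)=1\notin\fkm K$, that is, $\varphi(1)\notin\fkm\omega_R$.

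\emph{The equivalent formulation.} From $C=\omega_R/R\varphi(1)$ we get
\[
\mu_R(C)=\dim_k \omega_R/(\fkm\omega_R+R\varphi(1)).
\]
This equals $\mu_R(\omega_R)-1$ exactly when $\varphi(1)\notin\fkm\omega_R$, and equals $\mu_R(\omega_R)$ otherwise. Since $\mu_R(\omega_R)=\mathrm r(R)$ by \cite[Proposition~3.3.11]{BH}, the two statements are equivalent.

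The main obstacle is the second case of the contradiction step: one must recognize that a unit in $\fkm y$ makes $\fkm$ invertible, which is exactly where the non-Gorenstein hypothesis is used. The remaining steps are routine bookkeeping.
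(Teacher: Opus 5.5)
Your proposal is correct and follows essentially the same route as the paper: normalize so that $\varphi$ is the inclusion $R\subseteq\omega_R$, take $y$ from Lemma~\ref{lem:local-summand}, and note that $1\in\fkm\omega_R$ would give $y\in y\fkm\omega_R$, while the case $y\fkm\not\subseteq\fkm$ forces $\fkm$ to be principal and $R$ to be regular. The only difference is ordering: the paper first rules out $y\fkm\not\subseteq\fkm$ and then derives the contradiction, whereas you expand $1=\sum m_ik_i$ and split into cases.
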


\begin{proof}
By Remark~\ref{rem:generically-gorenstein}, we may identify $\omega_R$ with an ideal of $R$. Since $\varphi(1)$ is a nonzerodivisor, replacing $\omega_R$ by $\varphi(1)^{-1}\omega_R$, we may assume that $R\subseteq\omega_R\subseteq\Q(R)$ and that $\varphi$ is the inclusion. By Lemma~\ref{lem:local-summand}, there exists $q\in\omega_R$ such that
\[
q\fkm\subseteq R,\qquad q\notin\fkm\omega_R+R.
\]
If $q\fkm\not\subseteq\fkm$, then $q\fkm=R$. Thus $\fkm=q^{-1}R$ is principal, forcing $R$ to be regular, contrary to the hypothesis. Hence $q\fkm\subseteq\fkm$. If $1\in\fkm\omega_R$, then $q\in q\fkm\omega_R\subseteq\fkm\omega_R$, a contradiction. Therefore $\varphi(1)=1\notin\fkm\omega_R$.

Since $1\notin\fkm\omega_R$ and $C=\omega_R/R$, we have $\mu_R(C)=\mu_R(\omega_R)-1=\mathrm{r}(R)-1$.
\end{proof}

\begin{rem}
The non-Gorenstein hypothesis in
Lemma~\ref{prop:mu-type-minus-one} is essential.
For $R=k[[t]]$, the exact sequence
\[
0\longrightarrow R\xrightarrow{\cdot t}\omega_R=R
\longrightarrow k\longrightarrow0
\]
has cokernel $k$, but $\mu_R(k)=1\ne0=\mathrm{r}(R)-1$.
\end{rem}

\begin{prop}\label{prop:local-numerical-criterion}
Assume that $k$ is infinite.
If the cokernel $C$ in \eqref{eq:local-canonical-embedding} satisfies
\[
e^0_{\fkm}(C)<2\mu_R(C),
\]
then there exist $x_1,\ldots,x_{d-1}\in\fkm$ forming a regular
sequence on both $R$ and $C$ such that $C/(x_1,\ldots,x_{d-1})C$ has $k$ as a direct summand.
\end{prop}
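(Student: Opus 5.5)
Since $k$ is infinite and $C$ is Cohen--Macaulay of dimension $d-1$ by Remark~\ref{rem:local-cokernel-dimension}, I choose $x_1,\ldots,x_{d-1}\in\fkm$ forming a superficial sequence for $C$ with respect to $\fkm$, i.e.\ a minimal reduction of $\fkm$ on $C$. Then $x_1,\ldots,x_{d-1}$ is $C$-regular and
\[
e^0_{\fkm}(C)=\ell_R\bigl(C/(x_1,\ldots,x_{d-1})C\bigr).
\]
By Remark~\ref{rem:simultaneous-regular-sequence}, I may modify the $x_i$ by elements of $\Ann_R(C)$ so that they are regular on $R$ as well. This modification does not change the action on $C$, so the quotient $\overline C:=C/(x_1,\ldots,x_{d-1})C$ and its length are unchanged. (If $d=1$, the sequence is empty and $\overline C=C$ already has finite length.)

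Now $\overline C$ is an Artinian module. It satisfies $\mu_R(\overline C)=\mu_R(C)$, since the $x_i$ lie in $\fkm$. Since the $x_i$ form a $C$-regular sequence, the type is also preserved: $\mathrm{r}_R(\overline C)=\mathrm{r}_R(C)=\ell_R(\Soc\overline C)$. By Lemma~\ref{prop:local-self-duality}, $\mathrm{r}_R(C)=\mu_R(C)$. Setting $\mu:=\mu_R(C)$, we therefore have
\[
\ell_R(\overline C)<2\mu,\qquad \ell_R(\overline C/\fkm\overline C)=\mu,\qquad \ell_R(\Soc\overline C)=\mu.
\]

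By Lemma~\ref{lem:local-summand}, it suffices to show $\Soc\overline C\not\subseteq\fkm\overline C$. Suppose instead that $\Soc\overline C\subseteq\fkm\overline C$. Then
\[
\ell_R(\fkm\overline C)\ge\ell_R(\Soc\overline C)=\mu,
\]
so
\[
\ell_R(\overline C)=\ell_R(\overline C/\fkm\overline C)+\ell_R(\fkm\overline C)\ge 2\mu,
\]
contradicting the hypothesis. Hence $k$ is a direct summand of $\overline C$.

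The main point needing care is the first step. I must justify that superficial elements exist (this is where $k$ infinite is used) and that for a Cohen--Macaulay module the length modulo a minimal reduction of $\fkm$ equals the multiplicity. I also need the standard fact that the type is preserved modulo a regular sequence. All of these are standard (Bruns--Herzog, Section~4.6 and Lemma~1.2.19-type results), so the argument is essentially the length count above.
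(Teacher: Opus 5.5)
Your proof is correct and is essentially the paper's argument: pass to an Artinian quotient $\overline C$ with $\ell_R(\overline C)=e^0_{\fkm}(C)$ and $\mu_R(\overline C)=\ell_R(\Soc\overline C)=\mu_R(C)$, then conclude from $\ell_R(\fkm\overline C)<\ell_R(\Soc\overline C)$ and Lemma~\ref{lem:local-summand}. The only difference is in how you reduce to the Artinian case. The paper inducts on $d$ with $x$ superficial for both $R$ and $C$, reducing the whole canonical embedding so that Lemma~\ref{prop:local-self-duality} applies to the cokernel over $R/xR$. You instead cut down $C$ in one step, get $R$-regularity from Remark~\ref{rem:simultaneous-regular-sequence}, and transport $\mathrm{r}_R(C)=\mu_R(C)$ using invariance of type modulo a $C$-regular sequence.
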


\begin{proof}
We prove the assertion by induction on $d$. Suppose first that $d=1$. Since $C$ has finite length and $\mathrm{r}_R(C)=\mu_R(C)$, we have
\[
\ell_R(\fkm C)=\ell_R(C)-\ell_R(C/\fkm C)=e^0_{\fkm}(C)-\mu_R(C)<\mu_R(C)=\mathrm{r}_R(C)=\ell_R(\Soc(C)).
\]
Thus $\Soc(C)\not\subseteq\fkm C$, and Lemma~\ref{lem:local-summand} shows that $C$ has $k$ as a direct summand.

Suppose $d>1$. Since $k$ is infinite, choose $x\in\fkm$ superficial for both $R$ and $C$. Both modules are Cohen--Macaulay of positive dimension, so $x$ is regular on both. Thus, the exact sequence \eqref{eq:local-canonical-embedding} induces an exact sequence $0 \to R/xR \to \omega_{R/xR} \to C/xC \to 0$, and
\[
e^0_{\fkm}(C/xC)=e^0_{\fkm}(C)<2\mu_R(C)=2\mu_R(C/xC).
\]
The induction hypothesis over $R/xR$ therefore gives a sequence regular on both $R/xR$ and $C/xC$ whose quotient of $C/xC$ has $k$ as a direct summand. Taking $x_1=x$ and lifting this sequence to $x_2,\ldots,x_{d-1}\in\fkm$ proves the assertion.
\end{proof}

\begin{cor}\label{cor:AG-implies-condition}
Assume that $k$ is infinite. If $R$ is non-Gorenstein and almost Gorenstein, then $R$ satisfies condition~\conditionname{}.
\end{cor}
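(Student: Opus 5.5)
The plan is to reduce directly to Proposition~\ref{prop:local-numerical-criterion}. Since $R$ is almost Gorenstein, there is an exact sequence $0\to R\to\omega_R\to C\to0$ with $e^0_{\fkm}(C)=\mu_R(C)$. If $C=0$, then $\omega_R\cong R$ and $R$ is Gorenstein, contrary to the hypothesis. Hence $C\ne0$, and so $\mu_R(C)\ge1$. Therefore
\[
e^0_{\fkm}(C)=\mu_R(C)<2\mu_R(C).
\]
This exact sequence is then of the form \eqref{eq:local-canonical-embedding}.

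Since $k$ is infinite, Proposition~\ref{prop:local-numerical-criterion} applies. It gives $x_1,\ldots,x_{d-1}\in\fkm$, regular on both $R$ and $C$, such that $C/(x_1,\ldots,x_{d-1})C$ has $k$ as a direct summand. Together with $C\ne0$, this is exactly condition~\conditionname{} in Definition~\ref{def22}.

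The only point requiring any care is the nonvanishing of $C$, which we need so that $\mu_R(C)>0$ and the inequality is strict. It follows because $\omega_R\cong R$ characterizes Gorenstein rings. No other obstacle is expected, since Remark~\ref{rem:local-cokernel-dimension} and the remaining hypotheses of the proposition are automatic in this setting.
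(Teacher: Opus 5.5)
Your proposal is correct and follows the paper's own proof: non-Gorensteinness forces the almost Gorenstein cokernel $C$ to be nonzero, so $e^0_{\fkm}(C)=\mu_R(C)<2\mu_R(C)$. Proposition~\ref{prop:local-numerical-criterion} then yields condition~\conditionname{}.
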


\begin{proof}
Since $R$ is not Gorenstein, a defining exact sequence \eqref{eq:intro-AG}  has a nonzero cokernel $C$ satisfying $e^0_{\fkm}(C)=\mu_R(C)<2\mu_R(C)$. 
The assertion follows from Proposition~\ref{prop:local-numerical-criterion}.
\end{proof}

For a fixed canonical embedding, the converse of Proposition~\ref{prop:local-numerical-criterion} need not hold. 

\begin{example}\label{ex:canonical-nonconverse}
Let $H=\langle5,7,8,11\rangle$ and $R=k[[H]]$ (see Section~\ref{sec:semigroup} for the notation). Then $\PF(H)=\{3,6,9\}$, and $\omega_R=Rt^{-3}+Rt^{-6}+Rt^{-9}$ is a canonical module of $R$. Set $C=\omega_R/Rt^{-6}$. The class of $t^{-3}$ in $C$ is a minimal generator annihilated by $\fkm$, since $3\in\PF(H)$. Hence $C$ has $k$ as a direct summand by Lemma~\ref{lem:local-summand}. On the other hand, one can check that 
\[
e^0_{\fkm}(C)=\ell_R(C)=5>4=2\mu_R(C).
\]
However, for a different embedding, we have
\[
\omega_R/Rt^{-9}\cong k^{\oplus2}.
\]
Thus $R$ itself is almost Gorenstein, and the multiplicity inequality holds for this latter cokernel.
\end{example}

\subsection{The graded case}\label{subsec:graded}

In this subsection, we work in the graded setting.

\begin{setup}
Let $R=\bigoplus_{i\ge0}R_i$ be a standard graded Cohen--Macaulay algebra of dimension $d>0$ over an infinite field $k=R_0$, and set $\fkm=R_+$. Let $M=\bigoplus_{i\in \mathbb{Z}} M_i$ be a finitely generated graded $R$-module of dimension $s$. Then, 
\[
\operatorname{Hilb}_M(t):=\sum_{j\in\mathbb Z}\dim_k M_j\cdot t^j
\]
is called the {\it Hilbert series of $M$}. 
The multiplicity $e^0_{\fkm}(M)$ of $M$ with respect to $\fkm$ is the leading coefficient of the polynomial agreeing with $(s-1)!\cdot\dim_k M_n $ for all sufficiently large $n$ when $s>0$. If $s=0$, we set $e^0_{\fkm}(M)=\ell_R(M)$. This agrees with the Hilbert--Samuel multiplicity $e^0_{\fkm R_{\fkm}}(M_{\fkm})$ in the local sense. We denote by $M^\vee$ the graded $k$-dual of $M$, whose homogeneous components are
\[
(M^\vee)_j=\Hom_k(M_{-j},k)\qquad(j\in\mathbb Z).
\]
Let $\omega_R$ denote the graded canonical module of $R$. Recall that the \emph{$a$-invariant} of $R$ is
\[
a(R):=-\min\{j\in\mathbb Z\mid[\omega_R]_j\ne0\}.
\]
\end{setup}

\begin{rem}\label{rem:graded-cokernel}
Suppose that there is a graded exact sequence
\[
0\longrightarrow R\xrightarrow{\varphi}\omega_R(-a(R))
\longrightarrow C\longrightarrow0
\]
with $C\ne0$. Then, the following hold. 

\begin{enumerate}[{\rm (1)}] 
\item $C$ is Cohen--Macaulay of dimension $d-1$.
\item $\varphi(1)$ is part of minimal generators of $\omega_R(-a(R))$. Hence, $\mu_R(C)=\mathrm{r}(R)-1$. 
\end{enumerate}
\end{rem}

The following lemma will be used in Section~\ref{sec:graph}.

\begin{lem}\label{lem:graded-two-degrees}
Let $M$ be a finite-dimensional graded $R$-module
concentrated in degrees zero and one.
Suppose that $M^\vee\cong M(1)$.
Then $M$ has a graded $k$-summand if and only if
\[
\ell_R(M)<2\mu_R(M).
\]
\end{lem}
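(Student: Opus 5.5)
Write $a=\dim_k M_0$ and $b=\dim_k M_1$. Since $(M^\vee)_j=\Hom_k(M_{-j},k)$, the dual $M^\vee$ lives in degrees $0$ and $-1$, while $M(1)$ lives in degrees $-1$ and $0$ with $M(1)_{-1}=M_0$ and $M(1)_0=M_1$. So $M^\vee\cong M(1)$ forces $a=b$. The plan is then to express both sides of the equivalence in terms of one linear map. Because $R$ is standard graded, $\fkm M=R_1M_0\subseteq M_1$ and $\fkm M_1=0$. Hence
\[
\fkm M=\operatorname{im}\bigl(\mu\colon R_1\otimes_k M_0\to M_1\bigr),\qquad \Soc(M)=M_1\oplus K,
\]
where $K=\{y\in M_0\mid R_1y=0\}$. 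This gives $\mu_R(M)=a+b-r$ with $r=\dim_k\fkm M$, and $\ell_R(M)=a+b$.

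By the graded analogue of Lemma~\ref{lem:local-summand}, $M$ has a graded $k$-summand if and only if some homogeneous socle element lies outside $\fkm M$. I would verify this graded analogue directly, with the same retraction argument using a graded functional on $M/\fkm M$. A socle element of degree $1$ lies outside $\fkm M$ exactly when $\fkm M\subsetneq M_1$, i.e.\ $r<b$. A socle element of degree $0$ lies outside $\fkm M$ exactly when $K\neq0$. So $M$ has a graded $k$-summand if and only if $r<b$ or $K\ne0$.

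Next I use duality to show that $K\neq0$ if and only if $r<b$. In $M^\vee$, the degree $0$ component is $(M_0)^*$ and the degree $-1$ component is $(M_1)^*$, and the action $R_1\otimes (M_1)^*\to (M_0)^*$ is the transpose of $\mu$. The socle of $M^\vee$ in degree $-1$ consists of functionals on $M_1$ killed by $R_1$, that is, the functionals vanishing on $\operatorname{im}\mu$. Its dimension is therefore $b-r$. Transporting along $M^\vee\cong M(1)$, this degree $-1$ socle corresponds to the degree $0$ socle of $M$, which is $K$. Hence $\dim_k K=b-r$. In particular $K\ne0$ if and only if $r<b$, so the two cases above coincide, and $M$ has a graded $k$-summand if and only if $r<b$.

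Finally I compare numerically. The inequality $\ell_R(M)<2\mu_R(M)$ reads $a+b<2(a+b-r)$, i.e.\ $2r<a+b=2b$, i.e.\ $r<b$. This matches the criterion above, which proves the equivalence.

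The main obstacle is the bookkeeping in the duality step. One must check that the graded isomorphism matches the correct degrees, so that $M(1)_{-1}=M_0$ corresponds to $(M^\vee)_{-1}=(M_1)^*$. One must also check that the socle of $M^\vee$ is the annihilator of $\fkm M$ under the pairing. Both are routine once the $R$-action on $M^\vee$ is written out.
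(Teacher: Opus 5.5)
Your proposal is correct and follows essentially the paper's route: use $M^\vee\cong M(1)$ to get $\dim_k M_0=\dim_k M_1$, reduce $\ell_R(M)<2\mu_R(M)$ to $\fkm M\subsetneq M_1$, and use self-duality to show that a degree-zero $k$-summand forces a degree-one one. The only difference is in that last step: the paper dualizes a summand $k$ of $M$ directly to get a summand $k(-1)$ of $M^\vee(-1)\cong M$, whereas you compute $\dim_k\Soc(M)_0=\dim_k(M_1/\fkm M)$ via the transpose of multiplication. That takes a bit more bookkeeping but gives a slightly sharper statement.
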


\begin{proof}
The isomorphism $M^\vee\cong M(1)$ gives $\dim_k M_0=\dim_k M_1$. Since $\fkm M\subseteq M_1$, we have
\begin{align*}
2\mu_R(M)-\ell_R(M)
=&2\left(\dim_k M_0+\dim_k(M_1/\fkm M)\right)-2\dim_k M_0 \\
=&2\dim_k(M_1/\fkm M).
\end{align*}
Thus the inequality $\ell_R(M)<2\mu_R(M)$ is equivalent to $\fkm M\subsetneq M_1$.

If $\fkm M\subsetneq M_1$, choose $y\in M_1\setminus\fkm M$. Since $\fkm M_1=0$, Lemma~\ref{lem:local-summand}, applied in the graded setting, shows that $Ry\cong k(-1)$ is a direct summand of $M$.

Conversely, any graded $k$-summand of $M$ is isomorphic to either $k$ or $k(-1)$. If $k$ is a direct summand, dualizing and using $M\cong M^\vee(-1)$ shows that $k(-1)$ is also a direct summand. Hence $M$ has a direct summand $k(-1)$, whose generator lies in $M_1\setminus\fkm M$. Thus $\fkm M\subsetneq M_1$.
\end{proof}

\begin{prop}\label{prop:graded-numerical-criterion}
Suppose that there is a graded exact sequence
\[
0\longrightarrow R\longrightarrow\omega_R(-a(R))
\longrightarrow C\longrightarrow0
\]
with $C\ne0$. Consider the following conditions. 
\begin{enumerate}[{\rm (i)}] 
\item $e^0_{\fkm}(C)<2\mu_R(C)$.
\item There exists a regular sequence $f_1,\ldots,f_{d-1}\in R_1$ on both $R$ and $C$ such that $C/(f_1,\ldots,f_{d-1})C$ has a graded $k$-summand. 
\end{enumerate}
Then, {\rm (i)} $\Rightarrow$ {\rm (ii)} holds. {\rm (ii)} $\Rightarrow$ {\rm (i)} also holds if  $a(R)\le2-d$. 

\end{prop}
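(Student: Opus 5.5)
We prove (i) $\Rightarrow$ (ii) by reducing modulo general linear forms until the cokernel has finite length, mirroring Proposition~\ref{prop:local-numerical-criterion}; for (ii) $\Rightarrow$ (i) we use the degree bound $a(R)\le 2-d$ together with Lemma~\ref{lem:graded-two-degrees}.

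\emph{Reduction.} Since $k$ is infinite and $R$, $C$ are Cohen--Macaulay (Remark~\ref{rem:graded-cokernel}(1)), we choose general linear forms $f_1,\ldots,f_{d-1}\in R_1$. They are superficial for, hence regular on, both $R$ and $C$. Set $\bar R=R/(f_1,\ldots,f_{d-1})$ and $\bar C=C/(f_1,\ldots,f_{d-1})C$. Since $\omega_R/(f)\omega_R\cong\omega_{\bar R}(-1)$ for each linear form and $a(\bar R)=a(R)+d-1$, the twists match. Because $\operatorname{Tor}_1(C,R/(\underline f))=0$, we obtain a graded exact sequence
\[
0\longrightarrow \bar R\longrightarrow \omega_{\bar R}(-a(\bar R))\longrightarrow \bar C\longrightarrow 0
\]
with $\dim\bar R=1$. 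Moreover $e^0(\bar C)=\ell(\bar C)=e^0_{\fkm}(C)$ (linear forms, so the Hilbert series is divided by $(1-t)^{d-1}$), and $\mu(\bar C)=\mu(C)$.

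\emph{(i) $\Rightarrow$ (ii).} Lemma~\ref{prop:local-self-duality} applies to the local ring $\bar R_{\fkm}$, or directly in graded form via $\Ext^1_{\bar R}(\bar C,\omega_{\bar R})\cong \bar C^\vee$ up to a shift. It gives $\ell(\Soc\bar C)=\mu(\bar C)$. Hence $\ell(\fkm\bar C)=\ell(\bar C)-\mu(\bar C)<\mu(\bar C)=\ell(\Soc \bar C)$, so $\Soc\bar C\not\subseteq\fkm\bar C$. Since $\Soc\bar C$ is a graded subspace, we can pick a homogeneous element in it outside $\fkm\bar C$. The graded version of Lemma~\ref{lem:local-summand} then yields a graded $k$-summand.

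\emph{(ii) $\Rightarrow$ (i).} Starting from the given $f_i$, we reduce as above to $\bar C$, which has a graded $k$-summand and satisfies $\ell(\bar C)=e^0(C)$ and $\mu(\bar C)=\mu(C)$. The key step is to locate the degrees of $\bar C$.
\begin{itemize}
\item $\omega_{\bar R}(-a(\bar R))$ starts in degree $0$, and the image of $1$ is a minimal generator (Remark~\ref{rem:graded-cokernel}(2)). Hence $\bar C$ lives in degrees $\ge 0$.
\item Graded duality gives $\bar C^\vee\cong \Ext^1_{\bar R}(\bar C,\omega_{\bar R})$ with a shift. Computing it from the sequence as in Lemma~\ref{prop:local-self-duality}, we get $\bar C^\vee\cong \bar C(s)$, where $s=a(\bar R)=a(R)+d-1$.
\end{itemize}
Let me check the twist. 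Applying $\Hom(-,\omega_{\bar R})$ yields $0\to \bar R(a)\to\omega_{\bar R}\to \Ext^1(\bar C,\omega_{\bar R})\to 0$ with $a=a(\bar R)$, so $\Ext^1(\bar C,\omega_{\bar R})\cong \bar C(a)$. Since $\bar C$ has finite length in dimension one, $\Ext^1(\bar C,\omega_{\bar R})\cong \bar C^\vee$, and therefore $\bar C^\vee\cong\bar C(a)$. The degrees of $\bar C$ therefore lie in $[0,a]$ and are symmetric under $j\mapsto a-j$. Since $\bar C\neq 0$ we need $a\ge0$, and the hypothesis $a(R)\le 2-d$ gives $a\le 1$.

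\emph{Cases.} If $a=1$, Lemma~\ref{lem:graded-two-degrees} applies directly and gives $\ell(\bar C)<2\mu(\bar C)$. If $a=0$, then $\bar C$ is concentrated in degree $0$, so $\fkm\bar C=0$ and $\ell(\bar C)=\mu(\bar C)<2\mu(\bar C)$ because $\bar C\ne0$. Translating back via $\ell(\bar C)=e^0_{\fkm}(C)$ and $\mu(\bar C)=\mu(C)$ gives (i).

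The main obstacle is the bookkeeping of graded twists in the duality, namely confirming $\bar C^\vee\cong\bar C(a(\bar R))$ and that $a(\bar R)=a(R)+d-1$, which is exactly what turns the hypothesis $a(R)\le2-d$ into the two-degree situation of Lemma~\ref{lem:graded-two-degrees}.
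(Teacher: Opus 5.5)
Your proposal is correct and follows essentially the same route as the paper. You reduce modulo linear forms regular on $R$ and $C$ to a one-dimensional graded canonical embedding whose cokernel $M$ satisfies $M^\vee\cong M(a(R)+d-1)$, and deduce $\ell_R(\Soc M)=\mu_R(M)$ for (i)$\Rightarrow$(ii); for (ii)$\Rightarrow$(i) you confine the degrees of $M$ to $[0,a(R)+d-1]\subseteq[0,1]$ and split into the two cases, using Lemma~\ref{lem:graded-two-degrees} when $a(R)=2-d$ (one harmless slip: reducing modulo a regular linear form multiplies the Hilbert series by $1-t$ rather than dividing it).
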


\begin{proof}
For any sequence $f_1,\ldots,f_{d-1}\in R_1$ regular on both $R$ and $C$, set $\overline R=R/(f_1,\ldots,f_{d-1})R$ and $M=C/(f_1,\ldots,f_{d-1})C$. Since $C$ is Cohen--Macaulay of dimension $d-1$, we have
\[
\ell_R(M)=e^0_{\fkm}(C),
\qquad
\mu_R(M)=\mu_R(C).
\]
Since $\omega_{\overline R}\cong(\omega_R/(f_1,\ldots,f_{d-1})\omega_R)(d-1)$, reducing the canonical embedding modulo the regular sequence gives
\[
0\longrightarrow\overline R\longrightarrow\omega_{\overline R}(-a(R)-d+1)\longrightarrow M\longrightarrow0.
\]
Applying graded $\Hom_{\overline R}(-,\omega_{\overline R})$ to this sequence, as in Lemma~\ref{prop:local-self-duality}, and using graded local duality \cite[Theorem~3.6.19]{BH}, we obtain
\begin{equation}\label{eq2191}
M^\vee\cong\Ext_{\overline R}^1(M,\omega_{\overline R})\cong M(a(R)+d-1).
\end{equation}
In particular, $\ell_R(\Soc(M))=\mu_R(M)$.

{\rm(i)} $\Rightarrow$ {\rm(ii)}: Since $k$ is infinite, we can choose a sequence of linear forms regular on both $R$ and $C$. By the above argument, condition {\rm(i)} gives
\[
\ell_R(\fkm M)=\ell_R(M)-\mu_R(M)
<\mu_R(M)=\ell_R(\Soc(M)).
\]
Thus $\Soc(M)\not\subseteq\fkm M$. Choosing a homogeneous element in $\Soc(M)\setminus\fkm M$ and applying the graded version of Lemma~\ref{lem:local-summand} proves {\rm(ii)}.

{\rm(ii)} $\Rightarrow$ {\rm(i)}: Assume that $a(R)\le2-d$, and choose the sequence as in {\rm(ii)}. For every $j$ with $M_j\ne0$, we have $j\ge0$, and \eqref{eq2191} gives
\[
0\ne\Hom_k(M_j,k)=(M^\vee)_{-j}\cong M_{a(R)+d-1-j}.
\]
Thus
\[
0\le j\le a(R)+d-1\le1.
\]
Since $M\ne0$, it follows that $a(R)+d-1$ is either $0$ or $1$. 
If $a(R)=1-d$, then $M$ is concentrated in degree zero, so
\[
e^0_{\fkm}(C)=\ell_R(M)=\mu_R(M)=\mu_R(C)<2\mu_R(C).
\]
If $a(R)=2-d$, then $M^\vee\cong M(1)$ and $M$ is concentrated in degrees zero and one. Since $M$ has a graded $k$-summand, Lemma~\ref{lem:graded-two-degrees} gives
\[
e^0_{\fkm}(C)=\ell_R(M)<2\mu_R(M)=2\mu_R(C).
\]
This proves {\rm(i)}.
\end{proof}

Recall that $R$ is called an \emph{almost Gorenstein graded ring} if there exists a graded exact sequence
\[
0\longrightarrow R\longrightarrow\omega_R(-a(R))\longrightarrow C\longrightarrow0
\]
such that $e^0_{\fkm}(C)=\mu_R(C)$ (\cite{GTT}). 

\begin{cor}\label{cor:graded-AG-implies-summand}
Suppose that $R$ is a non-Gorenstein almost Gorenstein graded ring, and choose a graded exact sequence
\[
0\longrightarrow R\longrightarrow\omega_R(-a(R))\longrightarrow C\longrightarrow0
\]
such that $e^0_{\fkm}(C)=\mu_R(C)$. Then, there exists a sequence $f_1,\ldots,f_{d-1}\in R_1$ regular on both $R$ and $C$ such that $C/(f_1,\ldots,f_{d-1})C$ has a graded $k$-summand.
\end{cor}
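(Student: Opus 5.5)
This is a direct application of Proposition~\ref{prop:graded-numerical-criterion}, implication (i)$\Rightarrow$(ii). So the plan is simply to check that the chosen almost Gorenstein sequence satisfies its hypotheses, in particular the strict inequality in (i).

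First, I will check that $C\ne0$. If $C=0$, then $\omega_R(-a(R))\cong R$, and $R$ would be Gorenstein, contradicting the hypothesis. So $C\neq 0$, and we are in the situation of Proposition~\ref{prop:graded-numerical-criterion}.

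Next, I will show that $\mu_R(C)>0$. Since $C$ is a nonzero finitely generated graded module, Nakayama's lemma gives $\mu_R(C)\ge1$. Alternatively, Remark~\ref{rem:graded-cokernel}(2) gives $\mu_R(C)=\mathrm{r}(R)-1\ge1$, because $R$ is not Gorenstein.

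Combining these facts with the choice of the sequence yields
\[
e^0_{\fkm}(C)=\mu_R(C)<2\mu_R(C),
\]
which is exactly condition (i) of Proposition~\ref{prop:graded-numerical-criterion}. Implication (i)$\Rightarrow$(ii) of that proposition requires no assumption on $a(R)$. Our standing hypothesis that $k$ is infinite guarantees the existence of linear forms $f_1,\ldots,f_{d-1}\in R_1$ regular on both $R$ and $C$, and the proposition then provides such a sequence with $C/(f_1,\ldots,f_{d-1})C$ having a graded $k$-summand. This completes the argument.

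There is no real obstacle here. The only point needing care is the strictness of the inequality $\mu_R(C)<2\mu_R(C)$, which is exactly where the non-Gorenstein hypothesis, through $C\ne0$, is used.
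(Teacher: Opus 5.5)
Your proposal is correct and follows the paper's own proof: the non-Gorenstein hypothesis gives $C\ne0$, hence $e^0_{\fkm}(C)=\mu_R(C)<2\mu_R(C)$. Implication (i)$\Rightarrow$(ii) of Proposition~\ref{prop:graded-numerical-criterion} then yields the sequence of linear forms with the graded $k$-summand.
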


\begin{proof}
Since $R$ is not Gorenstein, we have $C\ne0$, and hence $e^0_{\fkm}(C)=\mu_R(C)<2\mu_R(C)$.
The assertion follows from Proposition~\ref{prop:graded-numerical-criterion}.
\end{proof}

\section{Fiber products}\label{sec:fiber}

Let $(A,\fkm,k)$ and $(B,\fkn,k)$ be Noetherian local rings
with a common residue field $k$, and let $f:A\to k$ and
$g:B\to k$ be the canonical surjections. The subring
\[
R=A\times_k B
=\{(a,b)\in A\times B\mid f(a)=g(b)\}
\]
is called the \emph{fiber product of $A$ and $B$ over $k$}.
It is known that $R$ is a Noetherian local ring with maximal ideal
$\fkm_R:=\fkm\times\fkn$ and residue field $k$
\cite[Lemma~2.1(1),(2)]{EGI}.
By definition, there is an exact sequence of $R$-modules
\begin{equation}\label{eq:fiber-basic-sequence}
0\longrightarrow R\longrightarrow A\times B
\xrightarrow{\delta}k\longrightarrow0,
\qquad \text{where } \delta(a,b)=f(a)-g(b).
\end{equation}

Throughout the rest of this section, assume that $A$ and
$B$ are one-dimensional Cohen--Macaulay local rings.
Then $R$ is also a one-dimensional Cohen--Macaulay local
ring \cite[Lemma~2.1(3)]{EGI}.
The exact sequence above gives the following.

\begin{lem}\label{prop:fiber-canonical-sequence}
The following conditions are equivalent:
\begin{enumerate}[\rm(i)]
\item $R$ has a canonical module;
\item $A$ and $B$ have canonical modules.
\end{enumerate}
When these conditions hold, there is an exact sequence
of $R$-modules
\[
0\longrightarrow\omega_A\oplus\omega_B
\longrightarrow\omega_R\longrightarrow k\longrightarrow0.
\]
\end{lem}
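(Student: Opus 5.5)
We plan to dualize the basic sequence \eqref{eq:fiber-basic-sequence} with respect to a canonical module. Since $R$, $A\times B$ and $k$ are all finitely generated $R$-modules, this will transport canonical modules in both directions.

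Note first that $A$ and $B$ are module-finite over $R$, via the projections $R\to A$ and $R\to B$. These are surjective: given $a\in A$, choose $b\in B$ with $g(b)=f(a)$, which is possible since $g$ is surjective. Hence $A\cong R/(0\times\fkn)$ and $B\cong R/(\fkm\times0)$ are cyclic $R$-modules, both Cohen--Macaulay of dimension one, the same as $\dim R$.

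(i) $\Rightarrow$ (ii). Suppose $\omega_R$ exists. For a Cohen--Macaulay factor ring $S=R/I$ with $\dim S=\dim R$, the module $\Hom_R(S,\omega_R)$ is a canonical module of $S$ \cite[Theorem~3.3.7]{BH}. Applying this to $S=A$ and $S=B$ gives $\omega_A$ and $\omega_B$.

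(ii) $\Rightarrow$ (i). Here we use the criterion that a Cohen--Macaulay local ring has a canonical module if and only if it is a homomorphic image of a Gorenstein local ring. This is easiest to see via the completion. We have $\widehat R\cong\widehat A\times_k\widehat B$, since completion of the exact sequence \eqref{eq:fiber-basic-sequence} is exact and $\widehat{A\times B}=\widehat A\times\widehat B$. If $\widehat A$ and $\widehat B$ are quotients of Gorenstein rings, then so is $\widehat R$, being complete; but this alone does not descend. So we argue directly instead: we will construct $\omega_R$ as the module $\omega$ in the sequence described next, and verify that it is a canonical module by checking that $\widehat\omega\cong\omega_{\widehat R}$ \cite[Theorem~3.3.14]{BH}. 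We expect this to be the main obstacle.

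The construction goes as follows. We have $\Ext^1_R(k,\omega_A\oplus\omega_B)$ and we want a nonsplit extension $0\to\omega_A\oplus\omega_B\to\omega\to k\to0$. It should be obtained by applying $\Hom_{\widehat R}(-,\omega_{\widehat R})$ to \eqref{eq:fiber-basic-sequence}, which yields
\[
0\to\Hom(k,\omega)=0\to\Hom(A\times B,\omega)\cong\omega_A\oplus\omega_B\to\omega\to\Ext^1(k,\omega)\cong k\to\Ext^1(A\times B,\omega)=0 .
\]
This uses the vanishing $\Ext^1_R(A\times B,\omega_R)=0$, which holds because $A\times B$ is maximal Cohen--Macaulay, and $\Ext^1(k,\omega_R)\cong k$ because $d=1$. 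So the extension class lives in $\Ext^1_R(k,\omega_A\oplus\omega_B)$, which is a finite-length module and is therefore unchanged by completion. This lets us define $\omega$ over $R$ using the class coming from $\widehat R$, and its completion is then $\omega_{\widehat R}$.

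Finally, when canonical modules exist, the displayed sequence above, now taken over $R$ itself, is exactly the asserted exact sequence $0\to\omega_A\oplus\omega_B\to\omega_R\to k\to0$.
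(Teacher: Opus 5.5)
Your proposal is correct and follows the paper's proof: you obtain $\omega_A,\omega_B$ as $\Hom_R(A,\omega_R),\Hom_R(B,\omega_R)$, you get the exact sequence by applying $\Hom_R(-,\omega_R)$ to \eqref{eq:fiber-basic-sequence}, and for (ii) $\Rightarrow$ (i) you take the extension class of $0\to\omega_{\widehat A}\oplus\omega_{\widehat B}\to\omega_{\widehat R}\to k\to0$ and realize it over $R$, using that the finite-length module $\Ext^1_R(k,\omega_A\oplus\omega_B)$ is unchanged by completion, so that the resulting $R$-module completes to $\omega_{\widehat R}$. The points you leave implicit, namely $\widehat{\omega_A}\cong\omega_{\widehat A}$ and the compatibility of extension classes with completion, are left equally implicit in the paper.
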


\begin{proof}
Assume {\rm(i)}. Since we have the natural surjective projections
\[
\pi_A:R\longrightarrow A,\quad (a,b)\longmapsto a,
\qquad
\pi_B:R\longrightarrow B,\quad (a,b)\longmapsto b,
\]
$\Hom_R(A,\omega_R)$ and $\Hom_R(B,\omega_R)$ are canonical modules of $A$ and $B$, respectively.
This proves {\rm(ii)}.

Applying $\Hom_R(-,\omega_R)$ to
\eqref{eq:fiber-basic-sequence}, we obtain
\[
0\longrightarrow\omega_A\oplus\omega_B
\longrightarrow\omega_R
\longrightarrow k
\longrightarrow0.
\]
This is the asserted exact sequence.

Conversely, assume {\rm(ii)}. Completing \eqref{eq:fiber-basic-sequence} and using
the exactness of completion, we obtain $\widehat R\cong\widehat A\times_k\widehat B$. 
Since $\widehat R$ has a canonical module, applying the preceding argument to $\widehat R$ therefore gives
\begin{align}\label{eq32}
0\longrightarrow \omega_{\widehat A}\oplus\omega_{\widehat B}
\longrightarrow\omega_{\widehat R}
\longrightarrow k\longrightarrow0.
\end{align}

On the other hand, since $\Ext_R^1(k,\omega_A\oplus\omega_B)$ is annihilated by $\fkm_R$ and $R/\fkm_R \cong \widehat{R}/\fkm_R \widehat{R}$, we get 
\begin{align*}
\Ext_R^1(k,\omega_A\oplus\omega_B)&\cong (\widehat{R}/\fkm_R \widehat{R})\otimes_R \Ext_R^1(k,\omega_A\oplus\omega_B) \\
&\cong\widehat R\otimes_R\Ext_R^1(k,\omega_A\oplus\omega_B)\\
&\cong \Ext_{\widehat R}^1(k,\omega_{\widehat{A}}\oplus\omega_{\widehat{B}}).
\end{align*}
Under this isomorphism, let $\xi\in\Ext_R^1(k,\omega_A\oplus\omega_B)$ correspond
to the extension class of \eqref{eq32}. 
Choose an exact sequence
\[
0\longrightarrow \omega_A\oplus\omega_B\longrightarrow M\longrightarrow k
\longrightarrow0
\]
representing $\xi$, where $M$ is an $R$-module. By construction, $\widehat M\cong\omega_{\widehat R}$.
Hence $M$ is a canonical module of $R$.
\end{proof}

\begin{rem}\label{rem:fiber-colon-unit}
The following hold true.
\begin{enumerate}[\rm(i)] 
\item (\cite[Setting~4.1]{EGI}) $\Q(R)\cong\Q(A)\times\Q(B)$. In particular, $\Q(R)$ is Gorenstein if and only if both $\Q(A)$ and $\Q(B)$ are Gorenstein.
\item Suppose that $k$ is infinite, $R$ has a canonical module, and $\Q(R)$ is Gorenstein. 
\begin{enumerate}[{\rm (a)}] 
\item (\cite[Setting~4.1]{EGI}) We can choose fractional canonical ideals $K$ of $A$ and $L$ of $B$ such that
\[
A\subseteq K\subseteq\overline A,
\qquad
B\subseteq L\subseteq\overline B.
\]
\item We further suppose that $A$ is not a DVR. Then, there exists
$h_0\in(A:\fkm)\setminus K$ such that
$K:\fkm=K+Ah_0$ (\cite[Section~4.1, before Lemma~4.2]{EGI}).
We may choose such a generator $h$ with $h\fkm=\fkm$.
Indeed, $A:\fkm=\fkm:\fkm$ is a finite semilocal $A$-algebra. Fix a maximal ideal $\mathfrak M$ of $\fkm:\fkm$. If $h_0+c$ and $h_0+c'$ both belong to $\mathfrak M$ for $c,c'\in A$, then
\[
c-c'\in\mathfrak M\cap A=\fkm.
\]
Thus, to ensure that $h_0+c\notin\mathfrak M$, we need to avoid at most one residue class of $c$ modulo $\fkm$.

Since $\fkm:\fkm$ has only finitely many maximal ideals and $k$ is infinite, we can choose $c\in A$ such that $h:=h_0+c$ belongs to none of these maximal ideals. Hence $h$ is a unit of $\fkm:\fkm$. That is, $h\in \fkm:\fkm$ and $h^{-1}\in \fkm:\fkm$, so $h\fkm=\fkm$. Moreover, since $c\in A\subseteq K$, we still have
\[
h\in(A:\fkm)\setminus K,\qquad K:\fkm=K+Ah.
\]
The same assertion holds for $B$ and $L$ when $B$ is not a DVR.
\end{enumerate}
\end{enumerate}
\end{rem}



We now prove Theorem~\ref{thm:fiber-intro}.

\begin{proof}[Proof of Theorem~\ref{thm:fiber-intro}]
{\rm(ii)} $\iff$ {\rm(iii)}: This is \cite[Proposition~2.2(3)]{EGI}. 

{\rm(i)} $\Rightarrow$ {\rm(ii)}: Note that $R$ is not regular since $\mu_R(\fkm_R)=\mu_A(\fkm)+\mu_B(\fkn)\ge2$ by \cite[Proposition~2.2(1)]{EGI}. By Remark~\ref{lem:gorenstein-condition-regular}, we have the assertion.

{\rm(iii)} $\Rightarrow$ {\rm(i)}: We may assume that $B$ is not a DVR. Choose $K$ and $L$ as in Remark~\ref{rem:fiber-colon-unit}(ii)(a). By Remark~\ref{rem:fiber-colon-unit}(ii)(b), choose $g_2\in(B:\fkn)\setminus L$ such that
\[
L:\fkn=L+Bg_2,\qquad g_2\fkn=\fkn.
\]

\smallskip
\noindent
{\bf Case 1 ($A$ is not a DVR).} 
Choose $g_1\in(A:\fkm)\setminus K$ similarly, with $K:\fkm=K+Ag_1$ and $g_1\fkm=\fkm$. Set
\[
\psi=(g_1,g_2),\qquad X=(K\times L)+R\psi.
\]
By \cite[Lemma~4.2]{EGI}, we have $X\cong\omega_R$. Since $\fkm_R\psi=\fkm\times\fkn\subseteq K\times L$ and $\psi\notin K\times L$, we obtain
\[
X/(K\times L)\cong k,\qquad
(K\times L)\cap R\psi=\fkm_R\psi=\fkm\times\fkn.
\]
Both $g_1$ and $g_2$ are invertible in their respective total quotient rings, so multiplication by $\psi$ defines an embedding $R\to X$. Its cokernel is
\[
C:=X/R\psi\cong(K\times L)/(\fkm\times\fkn)
\cong K/\fkm\oplus L/\fkn.
\]
Since $K\subseteq\overline A$ and $\overline A$ is integral over $A$, we have $\fkm K\subseteq\fkm\overline A\ne\overline A$, so $1\notin\fkm K$. Thus the class of $1$ in $K/\fkm$ is annihilated by $\fkm$ and lies outside $\fkm(K/\fkm)$. By Lemma~\ref{lem:local-summand}, $K/\fkm$, and therefore $C$, has $k$ as a direct summand.

\smallskip
\noindent
{\bf Case 2 ($A$ is a DVR).}
Then $K=A$. Let $\fkm=xA$, and set
\[
\psi=(x^{-1},g_2),\qquad X=(A\times L)+R\psi.
\]
Since $\fkm_R\psi=A\times\fkn\subseteq A\times L$ and $\psi\notin A\times L$, we have
\[
X/(A\times L)\cong k,\qquad
(A\times L)\cap R\psi=A\times\fkn.
\]

We first compute $X:\fkm_R$ to show that $X\cong\omega_R$. For $(u,v)\in\Q(A)\times\Q(B)$, we have
\[
\begin{aligned}
(u,v)\in X:\fkm_R
&\Longleftrightarrow
(u\fkm)\times0\subseteq X
\ \text{and}\ 0\times(v\fkn)\subseteq X\\
&\Longleftrightarrow
u\fkm\subseteq A
\ \text{and}\ v\fkn\subseteq L.
\end{aligned}
\]
For the last equivalence, write an element of $X$ as
\[
(s,t)+(a,b)\psi=(s+ax^{-1},\,t+bg_2),
\qquad (s,t)\in A\times L,\quad (a,b)\in R.
\]
If either coordinate is zero, then $a\in xA = \fkm$ or $b\in\fkn$, respectively, because $x^{-1}\notin A$ and $g_2\notin L$. Since $(a,b)\in R$, either condition implies both, and hence the element belongs to $A\times L$. The converse follows from $A\times L\subseteq X$. Therefore,
\[
X:\fkm_R=(A:\fkm)\times(L:\fkn).
\]
Since $(A:\fkm)/A\cong k$, $(L:\fkn)/L\cong k$, and $X/(A\times L)\cong k$, we have
\[
\ell_R((X:\fkm_R)/X)=\ell_R((X:\fkm_R)/(A\times L))-\ell_R(X/(A\times L))=2-1=1.
\]
Hence $(X:\fkm_R)/X\cong k$.

On the other hand, applying $\Hom_R(-,X)$ to $0\to\fkm_R\to R\to k\to0$, we get $(X:\fkm_R)/X\cong\Ext_R^1(k,X)$. Therefore $\rmr_R(X)=1$. Since $R\subseteq X\subseteq\Q(R)$, $X$ is faithful and maximal Cohen--Macaulay, so \cite[Proposition~3.3.13]{BH} gives $X\cong\omega_R$.

Multiplication by the invertible element $\psi\in\Q(R)$ now gives an embedding $R\to X$ with cokernel
\[
C:=X/R\psi\cong(A\times L)/(A\times\fkn)\cong L/\fkn.
\]
The same argument as in Case~1, applied to $B\subseteq L\subseteq\overline B$, shows that $L/\fkn$ has $k$ as a direct summand. 

Therefore, $R$ satisfies condition~\conditionname{} in both cases.
\end{proof}

The assumption that $k$ is infinite in Theorem~\ref{thm:fiber-intro} cannot be omitted, as the following example shows.

\begin{example}\label{ex:fiber-finite-field}
Let $k=\mathbb Z/2\mathbb{Z}$ and set
\[
R:=\bigl(k[[x,y]]/(xy)\bigr)\times_k k[[z]]
\cong k[[x,y,z]]/(xy,xz,yz).
\]
Then $R$ is a non-Gorenstein one-dimensional Cohen--Macaulay local ring with a canonical module, and $\Q(R)$ is Gorenstein. However, $R$ does not satisfy condition~\conditionname{}.
\end{example}

\begin{proof}
The ring $R$ is complete, reduced, and one-dimensional Cohen--Macaulay, so it has a canonical module and $\Q(R)$ is Gorenstein. Moreover,
\[
R/(x+y+z)R\cong k[[x,y]]/(x,y)^2,
\]
which gives $\mathrm{r}(R)=2$.

Suppose that $R$ satisfies condition~\conditionname{}, and choose an exact sequence $0\to R\to\omega_R\to C\to0$ such that $C$ has $k$ as a direct summand. By Lemma~\ref{prop:mu-type-minus-one}, we have $\mu_R(C)=\mathrm{r}(R)-1=1$. Hence $C\cong k$, so $R$ is almost Gorenstein. This is a contradiction since $R$ is not almost Gorenstein (see \cite[Remark~3.5]{GTT}).
\end{proof}

\section{Numerical semigroup rings}\label{sec:semigroup}

Recall that a {\it numerical semigroup} is a subset $H\subseteq\mathbb N=\{0,1,2,\ldots\}$ containing $0$, closed under addition, and with finite complement in $\mathbb N$. 
Throughout this section, let $H\subsetneq\mathbb N$ be a numerical semigroup and let $k$ be a field. The subring
\[
R:=k[[H]]=k[[t^h\mid h\in H]]
\]
of the formal power series ring $k[[t]]$ is called the {\it numerical semigroup ring} of $H$ over $k$. We write
\[
\PF(H):=\{a\in\mathbb N\setminus H\mid a+h\in H\text{ for all }h\in H\setminus\{0\}\}
\]
for the set of {\it pseudo-Frobenius numbers} of $H$.
    
\begin{fact}(\cite[Example~(2.1.9)]{Goto-Watanabe})
\[
\omega_R:=\sum_{\gamma\in\PF(H)}Rt^{-\gamma}
\] 
is a canonical module of $R$.
\end{fact}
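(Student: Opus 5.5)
The plan is to identify $\omega_R$ as the Matlis dual of the top local cohomology, describe that dual in terms of monomials, and then read off its minimal generators as the $t^{-\gamma}$ with $\gamma\in\PF(H)$. Since $R=k[[H]]$ is a complete one-dimensional Cohen--Macaulay local domain, local duality gives
\[
\omega_R\cong\Hom_R\bigl(H^1_{\fkm}(R),E_R(k)\bigr).
\]

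\textbf{Step 1: local cohomology.} Compute $H^1_{\fkm}(R)$ using the regular element $t^{e}$, where $e$ is the multiplicity of $H$. This gives
\[
H^1_{\fkm}(R)\cong R_{t^e}/R=k[[t]][t^{-1}]/R .
\]
This module has a topological $k$-basis given by the classes of $t^{c}$ with $c\in\mathbb Z\setminus H$; only finitely many of these have $c>0$, and all $c<0$ occur. The module $t^h$, $h\in H$, acts by shifting exponents.

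\textbf{Step 2: the dual module.} Compute the Matlis dual as the graded (continuous) $k$-dual, as in Section~\ref{subsec:graded}, and pass to completion. Set
\[
\Omega:=\widehat{\bigoplus}_{c\notin H}k\,t^{-c}\subseteq k((t)),
\]
that is, the power series supported on exponents $-c$ with $c\notin H$. I will check that $\Omega$ is an $R$-submodule of $k((t))$: if $c\notin H$, $h\in H$ and $c-h\in H$, then $c=(c-h)+h\in H$, a contradiction. The dual pairing $t^{c}\leftrightarrow t^{-c}$ should give an $R$-isomorphism $\Hom_R(H^1_{\fkm}(R),E)\cong\Omega$. This step is the main obstacle, because one must verify that the $R$-actions match under the identification $E_R(k)\cong k[t^{-1}]$-type duals. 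I will do this first for the graded ring $k[H]$ using graded local duality \cite[Theorem~3.6.19]{BH}, and then complete. Alternatively, one can show directly that $\Omega$ is a faithful maximal Cohen--Macaulay fractional ideal of type $1$ and apply \cite[Proposition~3.3.13]{BH}. For the type, note that $(\Omega:\fkm)/\Omega$ is spanned by $t^{-c}$ with $c\in H$ and $c+(H\setminus\{0\})\subseteq H$, which forces $c=0$; so it is one-dimensional.

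\textbf{Step 3: minimal generators.} $\Omega$ is finitely generated, and it is graded by $t$-degree, so $\Omega/\fkm\Omega$ is spanned by the monomials $t^{-c}$, $c\notin H$, that do not lie in $\fkm\Omega$. Now $t^{-c}\in\fkm\Omega$ if and only if $t^{-c}=t^{h}t^{-c'}$ with $h\in H\setminus\{0\}$ and $c'=c+h\notin H$. Hence $t^{-c}\notin\fkm\Omega$ exactly when $c+h\in H$ for every $h\in H\setminus\{0\}$, that is, when $c\in\PF(H)$. Note also that each $c\in\PF(H)$ satisfies $c\ge0$. By the graded Nakayama lemma, $\Omega=\sum_{\gamma\in\PF(H)}Rt^{-\gamma}$, and therefore this sum is a canonical module of $R$.
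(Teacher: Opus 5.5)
Your route is the standard one and is essentially what the cited Goto--Watanabe example does; the paper itself gives no argument beyond the citation. Here $\omega_R$ is the Matlis dual of $H^1_{\fkm}(R)=k((t))/R$. Since $R$ contains $k$ as a coefficient field, $\Hom_R(M,E_R(k))\cong\Hom_k(M,k)$ for every $\fkm$-torsion $M$. Your closure check ($c\notin H$, $h\in H$ $\Rightarrow$ $c-h\notin H$) is exactly what makes the $R$-actions on the dual and on $\Omega$ agree, so Step 2 is not really an obstacle.

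The alternative type computation, however, has the sign reversed. For $c\in H$, the condition $t^{-c}\fkm\subseteq\Omega$ means $t^{h-c}\in\Omega$, i.e.\ $c-h\notin H$, for all $h\in H\setminus\{0\}$. Your condition $c+(H\setminus\{0\})\subseteq H$ holds for every $c\in H$ and forces nothing. With the correct condition, taking $h=c$ (if $c>0$) gives $c-h=0\in H$, so $c=0$.

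More importantly, Step 3 has a gap at the negative indices. The set $\mathbb Z\setminus H$ contains all negative integers, so you must show that no $c<0$ satisfies $c+(H\setminus\{0\})\subseteq H$. Your remark that elements of $\PF(H)$ are nonnegative is true by definition and does not address this. The fix: for $c<0$, let $F=\max(\mathbb N\setminus H)$ and $h=F-c$. Then $h>F\ge 1$, so $h\in H\setminus\{0\}$, while $c+h=F\notin H$.

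This is the only place where $H\ne\mathbb N$ is used, and it is genuinely needed. For $H=\mathbb N$ one has $\Omega=tk[[t]]$, minimally generated by $t=t^{-(-1)}$, while $\PF(\mathbb N)=\emptyset$.

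You should also note that $\Omega=\bigoplus_{g\in\mathbb N\setminus H}kt^{-g}\oplus tk[[t]]$ is generated by finitely many monomials. Hence $\fkm\Omega$ is again described by a support condition, and ordinary Nakayama over the complete ring applies. With the negative-index fix and this observation, your argument proves the Fact.
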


\begin{lem}\label{lem:2}
$(R:\mathfrak{m})/R \cong \sum_{\alpha \in \PF(H)}kt^{\alpha}$.
\end{lem}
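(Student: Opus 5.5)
We will compute both sides monomially, working throughout inside $k[[t]]$. First we identify $R:\fkm$ as a subset of $\Q(R)=k((t))$. Since $\fkm$ is generated by monomials, namely $\fkm=\widehat{\sum_{h\in H\setminus\{0\}}kt^h}$ (the closure of the span of monomials $t^h$ with $h\in H\setminus\{0\}$), we want to show that $R:\fkm$ is also a monomial module. That is, we aim to prove
\[
R:\fkm=\{\,f\in k((t))\mid \text{every exponent } a \text{ in } \operatorname{supp}(f) \text{ satisfies } a+(H\setminus\{0\})\subseteq H\,\}.
\]
The inclusion $\supseteq$ is clear: for such an $f$, each monomial $t^a$ appearing in $f$ multiplies $\fkm$ into $R$, and convergence is fine since $R$ is closed in $k[[t]]$.

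For the inclusion $\subseteq$, the plan is to use $t$-adic valuations together with the grading. Let $f\in R:\fkm$. Since $R$ is spanned topologically by the monomials $t^h$ with $h\in H$, the condition $ft^h\in R$ says precisely that $\operatorname{supp}(f)+h\subseteq H$. Applying this for each nonzero $h\in H$ gives that every exponent $a$ in the support satisfies $a+h\in H$ for all $h\in H\setminus\{0\}$. Two consequences follow. Such an $a$ must satisfy $a\ge 0$: indeed, taking $h$ large in $H$ would otherwise still work, so instead we argue that $a+h\ge 0$ holds for all $h\in H\setminus\{0\}$, and the smallest nonzero element $e$ of $H$ forces $a\ge -e$. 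Then $a+e\in H$ and iterating shows $a\in\mathbb{Z}$ with either $a\in H$ or $a\in\PF(H)$. Here one must check that a negative $a$ is impossible: if $a<0$, then $a+e\in H$ with $0\le a+e<e$, so $a+e=0$, giving $a=-e$. But then $a+h'\in H$ would have to hold for the next generators, and $-e+h'\in H$ for all $h'\in H\setminus\{0\}$ would force $H=\mathbb N$ (since $e+\mathbb N\cap H$ shifts down), contradicting $H\ne\mathbb N$. This small case check is the only delicate point of the argument.

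Thus the exponents allowed in $R:\fkm$ are exactly $H\cup\PF(H)$, and $\PF(H)$ is a finite set. Hence
\[
R:\fkm=R+\sum_{\alpha\in\PF(H)}kt^{\alpha},
\]
and since $\PF(H)\cap H=\emptyset$, the quotient is
\[
(R:\fkm)/R\cong\bigoplus_{\alpha\in\PF(H)}kt^{\alpha}.
\]
Finally, each $t^{\alpha}$ is annihilated by $\fkm$ modulo $R$, so this is an isomorphism of $R$-modules, i.e. of $k$-vector spaces carrying the trivial $\fkm$-action. The main obstacle we expect is making the monomial-support argument rigorous for power series, that is, checking that membership in $R$ can be tested coefficientwise. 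This holds because $R$ consists exactly of the power series supported in $H$.
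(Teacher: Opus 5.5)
Your proof is correct and is essentially the paper's monomial computation; you also spell out the support argument that the paper compresses into ``as $\fkm$ is generated by monomials, so is $R:\fkm$''. The one real difference is how negative exponents are excluded: the paper uses the standard fact that $R:\fkm=\fkm:\fkm\subseteq\overline R=k[[t]]$ because $R$ is not a DVR, while you argue directly that a negative exponent must be $-e$ and then forces $H=\mathbb N$. That last step should be stated cleanly rather than via ``shifts down'': $H\setminus\{0\}\subseteq e+H$ gives $H\subseteq e\mathbb N$ by induction, so cofiniteness forces $e=1$, i.e.\ $H=\mathbb N$.
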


\begin{proof}
Since $R$ is not a DVR, we have $R:\fkm=\fkm:\fkm\subseteq\overline R=k[[t]]$. As $\fkm$ is generated by monomials, so is $R:\fkm$. For $a\in \mathbb{N}$, $t^a \in R:\mathfrak{m}$ if and only if $a\in H\cup \PF(H)$ by the definition of $\PF(H)$. Therefore,
\[
R:\fkm=R\oplus\bigoplus_{\alpha\in\PF(H)}kt^\alpha
\]
as $k$-vector spaces, proving the assertion.
\end{proof}

\begin{proof}[Proof of Theorem~\ref{thm:semigroup-intro}]
{\rm(iii)} $\Rightarrow$ {\rm(ii)}: Choose $\alpha,\beta,\delta\in\PF(H)$ with $\alpha+\beta=\delta$. Multiplication by $t^\delta$ gives
\[
\omega_R/Rt^{-\delta}\cong t^\delta\omega_R/R.
\]
The class of $t^\alpha=t^{\delta-\beta}$ is part of minimal generators of this quotient and is annihilated by $\fkm$ (Lemma~\ref{lem:2}). Hence Lemma~\ref{lem:local-summand} gives the assertion.

    {\rm (ii)} $\Rightarrow$ {\rm (i)}: Assuming that (ii) holds, 
    \[
    0 \to R \xrightarrow{\cdot t^{-\delta}} \omega_R \to \omega_R/Rt^{-\delta} \to 0
    \]
    gives the desired short exact sequence.

{\rm(i)} $\Rightarrow$ {\rm(iii)}: Write $C\cong\omega_R/Rg$ with $0\ne g\in\omega_R$. By Lemma~\ref{lem:local-summand}, choose $f\in\omega_R\setminus(\fkm\omega_R+Rg)$ such that $\fkm f\subseteq Rg$. Then $f/g\in R:\fkm$, so Lemma~\ref{lem:2} gives
\[
f/g=r+\sum_{\alpha\in\PF(H)}d_\alpha t^\alpha
\qquad(r\in R,\ d_\alpha\in k).
\]
Since $rg\in Rg$, we still have $f-rg\notin\fkm\omega_R+Rg$ and $\fkm(f-rg)\subseteq Rg$. Replacing $f$ by $f-rg$, we may therefore assume that $r=0$. Write
\[
g=\sum_{\delta\in\PF(H)}b_\delta t^{-\delta},
\qquad
f=\sum_{\gamma\in\PF(H)}c_\gamma t^{-\gamma},
\qquad \text{where \quad} b_\delta,c_\gamma\in R.
\]
Since $f\notin\fkm\omega_R$, some $c_\beta$ is a unit. For $\gamma\in\PF(H)$ with $\gamma\ne\beta$, we have $\gamma-\beta\notin H$: otherwise, $\gamma-\beta\in H\setminus\{0\}$ and therefore $\gamma=\beta+(\gamma-\beta)\in H$, a contradiction. Since $c_\gamma\in R$, it follows that $c_\gamma$ has no term of degree $\gamma-\beta$, so $c_\gamma t^{-\gamma}$ contributes nothing to the coefficient of $t^{-\beta}$. Thus the coefficient of $t^{-\beta}$ in $f$ is precisely the constant term of $c_\beta$, which is nonzero because $c_\beta$ is a unit.

Comparing this coefficient in
\[
\sum_{\gamma\in\PF(H)}c_\gamma t^{-\gamma}
=
\left(\sum_{\delta\in\PF(H)}b_\delta t^{-\delta}\right)
\left(\sum_{\alpha\in\PF(H)}d_\alpha t^\alpha\right),
\]
we obtain $-\beta=h-\delta+\alpha$ for some $h\in H$ and $\delta,\alpha\in\PF(H)$. If $h>0$, then $\beta+h\in H\setminus\{0\}$, and hence $\delta=\alpha+(\beta+h)\in H$, a contradiction. Therefore $h=0$, so $\delta=\beta+\alpha\in\PF(H)$.
\end{proof}

The canonical embedding satisfying condition~\conditionname{} need not be unique. 

\begin{ex}
Let $H=\langle 7,8,10,11,13\rangle$ and $R=k[[H]]$. Then
\[
\PF(H)=\{3,6,9,12\},
\qquad
\omega_R=Rt^{-3}+Rt^{-6}+Rt^{-9}+Rt^{-12}.
\]
Both $\omega_R/Rt^{-9}$ and $\omega_R/Rg$ have $k$ as a direct summand, where $g=t^{-12}+ct^{-9}$ for each $c\in k$.
\end{ex}

\begin{proof}
It is routine to check that $\PF(H)=\{3,6,9,12\}$ and $\omega_R=Rt^{-3}+Rt^{-6}+Rt^{-9}+Rt^{-12}$. Since $3\in\PF(H)$, the class of $t^{-6}$ in $\omega_R/Rt^{-9}$ is part of minimal generators annihilated by $\fkm$. By Lemma~\ref{lem:local-summand}, $\omega_R/Rt^{-9}$ has $k$ as a direct summand. 

For $\omega_R/Rg$, set $K:=t^{12}\omega_R=(1,t^3,t^6,t^9)$. Then $\omega_R/Rg\cong K/R(1+ct^3)$. 
Take $f=t^3(1+ct^3)=t^3+ct^6\in K$. Since $\fkm t^3\subseteq R$, we have $\fkm f\subseteq R(1+ct^3)$. Moreover, since $1,t^3,t^6,t^9$ minimally generate $K$, their classes form a $k$-basis of $K/\fkm K$. With respect to this basis, the classes of $1+ct^3$ and $f$ have coordinate vectors $(1,c,0,0)$ and $(0,1,c,0)$, respectively, which are linearly independent.
Hence
\[
f\notin\fkm K+R(1+ct^3),
\]
and Lemma~\ref{lem:local-summand} shows that the class of $f$ generates a $k$-summand.
\end{proof}

Theorem~\ref{thm:semigroup-intro} provides a wealth of examples satisfying condition~\conditionname{}, including infinite families of rings that are not almost Gorenstein, as illustrated below.

\begin{example}\label{ex:class-H-main-thm}
    Let $a\ge 1$ be an integer. Set $H_a:=\{0, 2a+1, 4a+1 \rightarrow\}\cup A$, where $A$ is a subset of $\{2a+2,\dots,4a-1\}$. Then we have the following. 
    \begin{enumerate}[\rm(i)]
        \item $H_a$ is a numerical semigroup.
        \item $2a,4a\in\PF(H_a)$. Hence $k[[H_a]]$ satisfies condition~\conditionname{}.
\item If $a\ge2$ and $4a-1\notin A$, then $k[[H_a]]$
is not almost Gorenstein.
    \end{enumerate}
\end{example}

\begin{proof}
{\rm(i)}
The sum of any two positive elements of $H_a$ is at least $4a+2$, and hence belongs to $H_a$. Thus $H_a$ is closed under addition. Since $0\in H_a$ and $\mathbb N\setminus H_a$ is finite, $H_a$ is a numerical semigroup.

{\rm(ii)}
Since $2a,4a\notin H_a$ and $2a+h,4a+h\ge4a+1$ for every $h\in H_a\setminus\{0\}$, we have $2a,4a\in\PF(H_a)$.
The last assertion follows from $2a+2a=4a$.

{\rm(iii)}
Suppose that $a\ge2$ and $4a-1\notin A$.
Then $4a-1\in\PF(H_a)$, and the Frobenius number of $H_a$ is $4a$.
If $H_a$ were almost symmetric, then
\cite[Theorem~2.4]{Nari} would give $1\in\PF(H_a)$.
Starting from $2a+1\in H_a$ and repeatedly adding $1$,
we would obtain $4a-1\in H_a$, a contradiction.
Hence $k[[H_a]]$ is not almost Gorenstein.
\end{proof}

\begin{example}
    Let $b\ge 2$ be an integer. Set $H_b:=\{0, 2b, 4b-2, 4b \rightarrow\}\cup B$, where $B$ is a subset of $\{2b+2,\dots,4b-3\}$. Then we have the following. 
    \begin{enumerate}[\rm(i)]
        \item $H_b$ is a numerical semigroup.
        \item $2b-2,2b+1,4b-1\in\PF(H_b)$. Hence $k[[H_b]]$ satisfies condition~\conditionname{}.
        \item If $b\ge3$ and $4b-4,4b-3\notin B$, then $k[[H_b]]$
is not almost Gorenstein.
    \end{enumerate}
\end{example}

\begin{proof}
{\rm(i)}
The sum of any two positive elements of $H_b$ is at least $4b$, and hence belongs to $H_b$. Thus $H_b$ is closed under addition. Since $0\in H_b$ and $\mathbb N\setminus H_b$ is finite, $H_b$ is a numerical semigroup.

{\rm(ii)}
The integers $2b-2,2b+1,4b-1$ do not belong to $H_b$.
Their sums with positive elements of $H_b$ are at least
$4b$, except for $(2b-2)+2b=4b-2\in H_b$.
Thus $2b-2,2b+1,4b-1\in\PF(H_b)$.
The last assertion follows from $(2b-2)+(2b+1)=4b-1$.

{\rm(iii)}
Suppose that $b\ge3$ and $4b-4,4b-3\notin B$.
Then $4b-3\in\PF(H_b)$, and the Frobenius number of $H_b$ is $4b-1$.
If $H_b$ were almost symmetric, then
\cite[Theorem~2.4]{Nari} would give $2\in\PF(H_b)$.
Starting from $2b\in H_b$ and repeatedly adding $2$,
we would obtain $4b-4\in H_b$, a contradiction.
Hence $k[[H_b]]$ is not almost Gorenstein.
\end{proof}

\section{Stanley--Reisner rings of graphs}\label{sec:graph}

A \emph{finite simple graph} consists of a finite set $V$ of vertices and a set of two-element subsets of $V$, called edges. Let $k$ be a field, and let $\Delta$ be such a graph with at least one edge. We regard $\Delta$ as a one-dimensional simplicial complex whose faces are the empty set, the vertices, and the edges. The ring
\[
k[\Delta]:=k[x_v\mid v\in V]\Big/\left(\prod_{v\in F}x_v\ \middle|\ F\subseteq V,\ F\notin\Delta\right)
\]
is called the \emph{Stanley--Reisner ring} of $\Delta$ over $k$.

Throughout this section, assume that $k$ is infinite and $\Delta$ is connected. Set $R=k[\Delta]$ and $\fkm=R_+$, and let $n=|V|\ge2$ and $m$ denote the numbers of vertices and edges of $\Delta$, respectively. By Reisner's criterion, $R$ is a Cohen--Macaulay standard graded $k$-algebra of dimension two.

\begin{lem}\label{lem:graph-Hilbert-series}
The following hold.
\begin{enumerate}[{\rm(i)}]
\item $\displaystyle \operatorname{Hilb}_R(t)=\frac{1+(n-2)t+(m-n+1)t^2}{(1-t)^2}$.
\item $\displaystyle \operatorname{Hilb}_{\omega_R}(t)=\frac{(m-n+1)+(n-2)t+t^2}{(1-t)^2}$.
\end{enumerate}
\end{lem}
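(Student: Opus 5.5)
For (i) I plan to use the standard formula for the Hilbert series of a Stanley--Reisner ring in terms of the $f$-vector:
\[
\operatorname{Hilb}_{k[\Delta]}(t)=\sum_{i=-1}^{\dim\Delta}\frac{f_i\,t^{i+1}}{(1-t)^{i+1}},
\]
where $f_i$ is the number of $i$-dimensional faces. This holds because $k[\Delta]$ has a $k$-basis of monomials whose supports are faces, and the monomials with support exactly $F$ contribute $\bigl(t/(1-t)\bigr)^{|F|}$. Here $f_{-1}=1$, $f_0=n$ and $f_1=m$, so
\[
\operatorname{Hilb}_R(t)=1+\frac{nt}{1-t}+\frac{mt^2}{(1-t)^2}=\frac{(1-t)^2+nt(1-t)+mt^2}{(1-t)^2}.
\]
Expanding the numerator gives $1+(n-2)t+(m-n+1)t^2$, which is (i).

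For (ii) I would use the fact that $R$ is Cohen--Macaulay of dimension $d=2$ (Reisner's criterion, recorded above). For such a ring, graded local duality \cite[Theorem~3.6.19]{BH} together with the standard consequence \cite[Corollary~4.4.6]{BH} gives
\[
\operatorname{Hilb}_{\omega_R}(t)=(-1)^d\operatorname{Hilb}_R(t^{-1}).
\]
Substituting $t^{-1}$ into the formula from (i) gives
\[
\frac{1+(n-2)t^{-1}+(m-n+1)t^{-2}}{(1-t^{-1})^2}.
\]
Multiplying the numerator and the denominator by $t^2$, and using $(t-1)^2=(1-t)^2$, turns this into
\[
\frac{(m-n+1)+(n-2)t+t^2}{(1-t)^2}.
\]
Since $d=2$, the sign $(-1)^d$ is $+1$, so this is exactly (ii).

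The only delicate point is the normalization of the graded canonical module: the duality formula must be applied with $\omega_R$ normalized as in the paper, namely as the graded dual of the top local cohomology, with no twist. I expect this to be the main thing to check. As a consistency check, the $h$-polynomial of $R$ has top coefficient $m-n+1\ge0$, which is nonnegative since $\Delta$ is connected, so $\deg h\le 2$. The formula then gives $a(R)=\deg h-d\le0$, in agreement with the lowest degree of $\omega_R$ read off from (ii). Everything else is routine algebra.
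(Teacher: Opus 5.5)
Your proposal is correct and is essentially the paper's proof. Part (i) is the face-count formula for Stanley--Reisner Hilbert series (\cite[Theorem~5.1.7]{BH}), and part (ii) follows from (i) via $\operatorname{Hilb}_{\omega_R}(t)=(-1)^d\operatorname{Hilb}_R(t^{-1})$ (\cite[Corollary~4.4.6(a)]{BH}), exactly as you do, with the algebra in both parts checking out.
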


\begin{proof}
{\rm(i)} follows from \cite[Theorem 5.1.7]{BH}. {\rm(ii)} follows from (i) and \cite[Corollary 4.4.6(a)]{BH}.
\end{proof}

We introduce the terminology and notation used in the statement and proof of Theorem~\ref{thm:graph-intro}.

\begin{definition}\label{def:graph-terminology}
A \emph{bridge} is an edge whose deletion disconnects $\Delta$, and a \emph{cut vertex} is a vertex whose deletion disconnects $\Delta$. For each vertex $v\in V$, we write $\Delta-v$ for the graph obtained by deleting $v$ and all edges incident to $v$, and denote its number of connected components by $c(\Delta-v)$.
\end{definition}

\begin{rem}{\rm ({\cite[Corollary~2.3(1) and equation~(3)]{DS}})}\label{prop:graph-type}
Assume that $n\ge3$. Then
\[
\mathrm{r}(R)=m-n+1+\sum_{v\in V}\bigl(c(\Delta-v)-1\bigr).
\]
\end{rem}

%

\begin{rem}\label{rem:graph-embeddings}
The following hold.
\begin{enumerate}[{\rm(i)}]
\item (\cite[Proposition~3.8]{MM}): $R$ is an almost Gorenstein graded ring if and only if $\Delta$ is a tree or can be obtained from a cycle by repeatedly attaching another cycle along a single vertex.
\item There exists a degree-zero embedding $R\to\omega_R$ if and only if $\Delta$ has no bridges.
\end{enumerate}
\end{rem}

\begin{proof}
{\rm(ii)} This follows from \cite[Theorem~1.1(i)$\iff$(ii)]{MM} and Reisner's criterion.
\end{proof}

We now prove Theorem~\ref{thm:graph-intro}.

\begin{proof}[Proof of Theorem~\ref{thm:graph-intro}]
{\rm(i)} $\Longleftrightarrow$ {\rm(ii)}: By Lemma~\ref{lem:graph-Hilbert-series}(ii), we have $a(R)\le0$. Since $\dim R=2$, Proposition~\ref{prop:graded-numerical-criterion} gives the equivalence when $C\ne0$. If $C=0$, neither condition holds.

{\rm(ii)} $\Longleftrightarrow$ {\rm(iii)}: 
Note that $m-n+1\ge0$ since $\Delta$ is connected. 

Suppose first that $m-n+1=0$. Then $\Delta$ is a tree, so $R$ is almost Gorenstein as a graded ring by \cite[Proposition~3.8]{MM}. For each vertex $v$, the number $c(\Delta-v)$ equals the number of edges incident to $v$. Thus $\sum_{v\in V}c(\Delta-v)=2m$, and Remark~\ref{prop:graph-type} gives
\[
\mathrm{r}(R)=\sum_{v\in V}\bigl(c(\Delta-v)-1\bigr)=2m-n=n-2.
\]
If $n=3$, then $R$ is Gorenstein, so every degree-zero embedding $R\to\omega_R(-a(R))$ is an isomorphism and {\rm(ii)} fails. If $n\ge4$, then $R$ is not Gorenstein, and Corollary~\ref{cor:graded-AG-implies-summand} gives {\rm(i)}, hence {\rm(ii)}. Thus {\rm(ii)} holds exactly when $n\ge4$.

Now suppose that $m-n+1>0$. By Lemma~\ref{lem:graph-Hilbert-series}(ii), we have $a(R)=0$. Remark~\ref{rem:graph-embeddings}(ii) shows that a degree-zero embedding $R\to\omega_R$ exists if and only if $\Delta$ has no bridges. If $\Delta$ has a bridge, neither {\rm(ii)} nor {\rm(iii)} holds.

Assume therefore that $\Delta$ has no bridges, and choose a graded exact sequence $0\to R\to\omega_R\to C\to0$. Lemma~\ref{lem:graph-Hilbert-series} gives
\begin{equation}\label{eq:graph-Hilbert-C}
\begin{aligned}
\operatorname{Hilb}_C(t)
&=\operatorname{Hilb}_{\omega_R}(t)-\operatorname{Hilb}_R(t)\\
&=\frac{(m-n)(1-t^2)}{(1-t)^2}
=\frac{(m-n)(1+t)}{1-t}.
\end{aligned}
\end{equation}
Together with Remark~\ref{rem:graded-cokernel}(2), this yields
\begin{equation}\label{eq:graph-numerical-data}
e^0_{\fkm}(C)=2(m-n),
\qquad
\mu_R(C)=\mathrm{r}(R)-1.
\end{equation}
These equalities also hold when $C=0$, since then $m=n$ and $\mathrm{r}(R)=1$. Consequently,
\begin{align*}
& e^0_{\fkm}(C)<2\mu_R(C)
\quad \Longleftrightarrow \quad \mathrm{r}(R)>m-n+1 \\
 \Longleftrightarrow \quad & \sum_{v\in V}\bigl(c(\Delta-v)-1\bigr)>0 \quad \Longleftrightarrow \quad \Delta\text{ has a cut vertex},
\end{align*}
where the second equivalence follows from Remark~\ref{prop:graph-type}.
\end{proof}

Theorem~\ref{thm:graph-intro} applies to a broad class of graphs beyond those giving almost Gorenstein graded rings. 

\begin{example}\label{ex:graph-beyond-AG}
Let $\Gamma$ be a connected graph on at least three vertices with no cut vertices that is not a cycle. Examples include the graphs in Figure~\ref{fig:graph-examples} and the complete graphs $K_q$ for all $q\ge4$. Starting from $\Gamma$, repeatedly attach a connected graph on at least three vertices with no cut vertices by identifying exactly one of its vertices with a vertex of the graph already constructed. After at least one attachment, the resulting graph $\Delta$ has no bridges and has a cut vertex, so it satisfies condition~{\rm(iii)} of Theorem~\ref{thm:graph-intro}. Since the initial graph $\Gamma$ has no cut vertices and is not a cycle, $\Delta$ cannot be obtained by repeatedly attaching cycles along single vertices. Thus $k[\Delta]$ is not almost Gorenstein as a graded ring (see Remark~\ref{rem:graph-embeddings}(i)).

\begin{figure}[htbp]
\centering
\begin{tikzpicture}[scale=.9,transform shape,line width=.6pt,every node/.style={font=\small}]

\begin{scope}[shift={(3.6,0)}]
\draw (-1.1,0)--(0,1)--(1.1,0);
\draw (-1.1,0)--(0,0)--(1.1,0);
\draw (-1.1,0)--(0,-1)--(1.1,0);
\foreach \p in {(-1.1,0),(1.1,0),(0,1),(0,0),(0,-1)}
  \fill \p circle (1.5pt);
\node at (0,-1.4) {};
\end{scope}

\begin{scope}
\draw (-1.1,-.85)--(1.1,-.85)--(0,1.05)--cycle;
\draw (0,-.22)--(-1.1,-.85);
\draw (0,-.22)--(1.1,-.85);
\draw (0,-.22)--(0,1.05);
\foreach \p in {(-1.1,-.85),(1.1,-.85),(0,1.05),(0,-.22)}
  \fill \p circle (1.5pt);
\end{scope}

\begin{scope}[shift={(7.2,0)}]
\coordinate (A) at (-1,-.85);
\coordinate (B) at (.45,-.85);
\coordinate (C) at (.45,.6);
\coordinate (D) at (-1,.6);
\coordinate (E) at (-.35,-.3);
\coordinate (F) at (1.1,-.3);
\coordinate (G) at (1.1,1.15);
\coordinate (H) at (-.35,1.15);

\draw[densely dashed] (A)--(E)--(F);
\draw[densely dashed] (E)--(H);

\draw (A)--(B)--(C)--(D)--cycle;
\draw (B)--(F)--(G)--(H)--(D);
\draw (C)--(G);

\foreach \p in {A,B,C,D,E,F,G,H}
  \fill (\p) circle (1.5pt);

\node at (0,-1.4) {};
\end{scope}

\end{tikzpicture}
\caption{}
\label{fig:graph-examples}
\end{figure}
\end{example}

%
%



\end{document}